\newcommand{\pagebudget}[1]{} % allocation of responsibility
\newtheorem{proposition}{Proposition}[section]
\newcommand{\R}{\mathbb{R}}
\newcommand{\Z}{\mathbb{Z}}
\newcommand{\mini}{\mathop{\mbox{minimize}}}
\newcommand{\st}{\mbox{subject to }}
\newcommand{\be}{\begin{equation}}
\newcommand{\ee}{\end{equation}}
\newcommand{\bea}{\begin{eqnarray}}
\newcommand{\eea}{\end{eqnarray}}
\newcommand{\dps}{\displaystyle}
\newcommand{\pref}[1]{(\ref{#1})}
\newcommand{\bvec}{\left(\begin{array}{c}}
\newcommand{\evec}{\end{array}\right)}
\newcommand{\bsub}{\begin{subequations}}
\newcommand{\esub}{\end{subequations}}
\newcommand{\ds}{\displaystyle}
\newcommand{\cT}{{\cal T}} 
\newcommand{\cJ}{{\cal J}} 
\newcommand{\cD}{{\cal D}} 
\newcommand{\cP}{{\cal P}} 
\newcommand{\cU}{{\cal U}}
\newcommand{\xb}{{\bar x}} 
\newcommand{\vb}{{\bar v}} 
\newcommand{\wb}{{\bar w}} 
\newcommand{\Xb}{{\bar X}}
\newcommand{\Vb}{{\bar V}}
\newcommand{\Wb}{{\bar W}}
\newcommand{\nn}{\nonumber}
\newcommand{\tc}{\textcolor}
\definecolor{darkgreen}{RGB}{0,100,0}   %define a custom color
\begin{document}

%%%%%%%%%%%%%%%%%%%%%%%%%%%%%%%%%%%%%%%%%%%%%%%%%%%%%%%%%%%%%%%%%%%%%%%%
\newpage
\pagestyle{plain}
\pagenumbering{roman}
\setcounter{page}{2}

%\addcontentsline{toc}{section}{Table of Content}
%\setcounter{tocdepth}{2}
%\tableofcontents

\vfill

%%%%%%%%%%%%%%%%%%%%%%%%%%%%%%%%%%%%%%%%%%%%%%%%%%%%%%%%%%%%%%%%%%%%%%%%
\newpage
\pagestyle{plain}
\setcounter{page}{1}
\pagenumbering{arabic}

%\showrefnames    % \norefnames turns off this option
%\showcitenames   % \nocitenames turns off this option

\norefnames
\nocitenames

\pagestyle{myheadings}
\markboth{Fu Lin, Sven Leyffer, and Todd Munson}
         {Two-Level MILP}

\author{Fu Lin, Sven Leyffer, and Todd Munson\thanks{Mathematics and Computer Science Division, Argonne National Laboratory, Argonne, IL 60439 (fulin@mcs.anl.gov, leyffer@mcs.anl.gov, tmunson@mcs.anl.gov).}}
\title{A Two-Level Approach to Large Mixed-Integer Programs with 
\\[0.25cm]
Application to Cogeneration in Energy-Efficient Buildings\thanks{Preprint ANL/MCS-P5332-0415}}

\date{\today}

\maketitle

\begin{abstract}
We study a two-stage mixed-integer linear program (MILP) with more than 1 million binary variables in the second stage. We develop a two-level approach by constructing a semi-coarse model (coarsened with respect to variables) and a coarse model (coarsened with respect to both variables and constraints). We coarsen binary variables by selecting a small number of pre-specified daily on/off profiles. We aggregate constraints by partitioning them into groups and summing over each group. With an appropriate choice of coarsened profiles, the semi-coarse model is guaranteed to find a feasible solution of the original problem and hence provides an upper bound on the optimal solution. We show that solving a sequence of coarse models converges to the same upper bound with proven finite steps. This is achieved by adding violated constraints to coarse models until all constraints in the semi-coarse model are satisfied. We demonstrate the effectiveness of our approach in cogeneration for buildings. The coarsened models allow us to obtain good approximate solutions at a fraction of the time required by solving the original problem. Extensive numerical experiments show that the two-level approach scales to large problems that are beyond the capacity of state-of-the-art commercial MILP solvers.

  \medskip \noindent
{\bf Keywords:} Coarsened models, distributed generation, large-scale problems, two-level approach, multi-period planning, resource and cost allocation, two-stage mixed-integer programs.
  
  \medskip \noindent
{\bf AMS subject classifications:} 91B32, 90C06, 90C11, 90C90.

%{\bf AMS-MSC2010: Choose from \url{http://www.ams.org/mathscinet/msc/msc2010.html}} 
%91B32          resource and cost allocation
%90C06   	Large-scale problems	
%90C11  	Mixed integer programming
%90C26  	Nonconvex programming, global optimization
%90C27  	Combinatorial optimization
%90C30  	Nonlinear programming
%90C90   	Applications of mathematical programming
%65K05  	Mathematical programming methods
% see http://www.ams.org/mathscinet/msc/msc2010.html

\end{abstract}

%%%%%%%%%%%%%%%%%%%%%%%%%%%%%%%%%%%%%%%%%%%%%%%%%%%%%%%%%%%%%%%%%%%%%%%%

%%%%%%%%%%%%%%%%%%%%%%%%%%%%%%%%%%%%%%%%%%%%%%%%%%%%%%%%%%%%%%%%%%%%%%%%
\section{Problem Definition and Motivation}
 \label{S:Intro}

We consider a hierarchical two-stage mixed-integer linear program (MILP) with integer variables in both the first and second stages. We are particularly interested in applications where the first-stage integer variables model design or purchasing decisions and the second-stage variables model operational decisions over a long time horizon (e.g., hourly operations over a decadal horizon). The goal is to take operational constraints into account when making a capital investment decision. Thus, our model is complicated by the fact that second-stage variables include both binary (on/off) decisions and continuous variables that model operational settings. Examples include the design of cogeneration units for commercial buildings subject to operational conditions~\citep{sidmarbailac05,stamarsidlaicofaki09,rengao10,prubranew13a,pruleynewbra14} and transmission network expansion subject to unit commitment constraints~\citep{algmotcon03,hedferonefisore10,munsauhob13,munhobwat14}. Models of this class can involve hundreds of thousands or even millions of binary variables and are beyond the scope of today's state-of-the-art solvers.

We consider a two-stage MILP with $m$ first-stage variables and three sets of $N$ second-stage variables. The first-stage variables are $y \in \{0,1\}^m$. We have three classes of second-stage variables: (1) on/off decisions, $x \in \{0,1\}^N$; (2) operational settings $v \in \R^N$ that are switched on or off by $x$, that is, $ L x \leq v \leq U x$ for finite bounds $L \leq U$; and (3) other second-stage variables $0 \leq w \in \R^N$.  
The MILP model is described by
\be\label{E:MILP}
   \begin{array}{ll}
     \dps \mini_{y,x,v,w} & a^T y + b^T x + c^T v + d^T w \\
     \st                 & A y + B x + C v + D w \leq f \\
                         & y \in \{ 0,1\}^m     \\
                         & x \in \{ 0,1\}^N     \\
                         & v \in \R^N, \; L x \leq v \leq U x  \\
                         & w \in \R^N, \; w \geq 0       ,      
   \end{array}
\ee
where $a \in \R^m$, $b,c,d \in \R^N$, $A \in \R^{M \times m}$, and $B, C, D \in \R^{M \times N}$. We assume that $m \ll N$ and $m \ll M$; that is, the number of first-stage variables is much smaller than the number of second-stage binary variables and coupling constraints.

In addition to a large number of binary and continuous variables in the second stage, the challenges of model~\eqref{E:MILP} also arise from coupling constraints. We do not assume any sparsity structure in matrices $B$, $C$, and $D$. This is in contrast to the arrow-type sparsity structure that arises, for example, in stochastic programs~\citep{birlou11}. Therefore, fixing the first-stage variable $y$ does not decompose~\eqref{E:MILP} into scenario-based subproblems. Moreover, the coupling constraints in~\eqref{E:MILP} are not amenable to decomposition methods such as Lagrangian relaxation~\citep{geo74,fis81}. The reason is that the number of coupling constraints, $M$, is of the same order as the number of variables, $N$. Dualizing all coupling constraints results in a large number of dual variables; hence, Lagrangian relaxation of~\eqref{E:MILP} does not yield efficient decomposition methods~\citep{fis81}.

Problem~\eqref{E:MILP} arises in several applications. In particular, our work is motivated by the cogeneration problem with renewable energy for commercial buildings. In this case, the first-stage design involves investment decisions for cogeneration units such as fuel cells, solar panels, and battery storage. The second-stage problem aims at optimal on/off hourly operation that takes into account technology specifics such as minimum/maximum power generation. Our goal is to include operational constraints in the design of cogeneration.

One of our objectives is to find the optimal first-stage solution of~(\ref{E:MILP}). However, the two-stage MILP with a large number of binary variables at the second stage is beyond the scope of state-of-the-art commercial MIP solvers. For example, a typical cogeneration model with a ten-year horizon results in $1.05$ million binary variables ($3650$ days $\times$ $24$ hours $\times$ $12$ units). On the other hand, a naive approach that solves~\eqref{E:MILP} with a short horizon at the second stage provides first-stage designs that are suboptimal for a long horizon problem. The reason is that short horizon problems do not take into account coupling constraints over a long horizon. Moreover, the problem data of short-horizon problems are not representative of long-horizon problems, resulting in suboptimal solutions.

We develop a {\em two-level approach\/} that coarsens the hourly on/off variables to daily operation profiles. Since the profile representation yields a model with many fewer variables, we refer to this step as the {\em primal (variable) coarsening\/}. The resulting {\em semi-coarse model\/} still contains the same order of constraints as in~\eqref{E:MILP}. We reduce the number of constraints by partitioning them into groups that are of the same size as the profiles and summing over each group. This aggregation of constraints results in a relaxed problem whose solutions may not be feasible for the original MILP model. We include the violated constraints, re-solve the coarsened MILP model, and repeat this process until all constraints are satisfied. We refer to the aggregation of constraints as the {\em dual (constraint) coarsening\/} and the resulting MILP as the {\em coarse model\/}. 

%Repeating the coarsening procedure of both variables and constraints builds a hierarchy of coarsened MILP models, resulting in a multilevel approach.

\subsection{Literature Review}

The idea of aggregating variables and constraints to build approximate optimization models is not new. Zipkin studies the effect of variable aggregation and row aggregation for linear programs in~\citep{zip80a,zip80b}. This framework is extended to stochastic linear programs by~\citet{bir85} and by~\citet{clagro97}. For integer programs, a large amount of work focuses on aggregating constraints into one or more {\em surrogate\/} constraints for which one can show that the solution of the original problem and that of the surrogate problem are identical. Early work includes that of~\citet{bal65}, \citet{glo68}, \citet{geo69}, and \citet{glo77}; see also the survey paper by~\citet{rogplawoneva91}. 

We note that the theory of surrogate constraints focuses mainly on integer programs with non-negative integer coefficients. The seminal work by~\citet{mat96} results in an exponential grow of the coefficients in the surrogate constraints. Many refinements have been developed, including simultaneous and sequential aggregation schemes, to reduce the magnitude of coefficients~\citep{rogplawoneva91}. For mixed-integer programs with real-valued coefficients, however, we are not aware of aggregation schemes that guarantee equivalence between the original problem and the surrogate problem. In our two-level approach, we employ constraint aggregation as a relaxation technique to identify a smaller set of constraints that are active at the optimal solution. This is achieved by solving a sequence of MILPs and adding the violated constraints until all constraints in the original MILP are satisfied. Furthermore, we take advantage of the LP warm-start to reduce the number of MILP re-solves and the computational time of each MILP.

A large body of literature appeared in the 1990s on bilevel or multilevel mixed integer programs; see~\citep{moobar90,wenyan90,edmbar92,hanjausav92,wenhua96} and survey papers~\citep{wenhsu91,viccal94}. One of the main theoretical emphases is on decentralized decision-making from a game-theoretic point of view~\citep{wenhsu91}. Optimality conditions for convex bilevel programs have been established in~\citep{viccal94}. A heuristic-based branch-and-bound method is developed by~\citet{wenyan90} and \citet{hanjausav92}, and tabu search method is introduced by~\citet{wenhua96}. Recent years have seen specific application-driven algorithms ranging from infrastructure protection planning~\citep{scachu08} to vulnerability analysis of power grids~\citep{pinmezdonles10}.

In this context, the multilevel method is specifically referred to the problem formulation. In contrast, our two-level method concentrates on algorithmic development for mixed-integer linear programs. In particular, our two-level approach builds optimization models of different resolutions from a fine-level problem to a coarse-level problem. We develop a systematic procedure that coarsens binary and continuous variables in addition to the aggregation of constraints. Our approach allows us to solve large MILPs with more than one million binary and continuous variables and coupling constraints. Extensive numerical experiments have been conducted to verify the efficiency of the developed algorithm. 

Our two-level approach is reminiscent of multi-grid methods in linear algebra and solution of partial differential equations. Related work includes that of~\citet{gelman90}, who discussed a multilevel iterative method for generic optimization problem; \citet{grasartoi08}, who developed a recursive trust-region method for nonlinear unconstrained problems; and \citet{wengol09}, who proposed a line search multi-grid approach for nonlinear programs. 

%In this paper, we focus on a two-level implementation in the multilevel framework; that is, we coarsen the original fine-level models to coarse-level models.

\subsection{Our Contributions}

Our contributions are fourfold. First, we develop a systematic two-level approach for two-stage MILPs with a large number of binary and continuous variables in the second stage. The coarsening of binary variables is done by introducing profiles (vectors) with binary elements. By selecting one profile from a small number of candidate profiles, we reduce the number of binary variables by orders of magnitude. In addition, we reduce the number of continuous variables by using a convex combination of profiles with real elements. We show that the coarsening in variables results in a tightening of the original MILP and therefore provides an upper bound on the optimal solution. 

Second, we devise a simple scheme for constraint aggregation that does not require any sparsity assumptions on the coefficient matrices. By partitioning constraints into groups and summing up constraints in each group, we obtain a relaxation of the original MILP with many fewer constraints. We solve a sequence of MILPs by adding any violated constraints until all constraints are satisfied. We show that the LP-relaxation of MILPs can be used to warm-start the MILP re-solves and significantly reduce the computational effort. 

Third, we implement our two-level approach in the context of the design of cogeneration for buildings. The resulting complex MILP model has a large number of coupling constraints over a long time horizon. We employ a moving-horizon method to generate valid profiles. We show by construction that the generated profiles satisfy coupling constraints in time, hence further reducing the number of constraints in the coarsened models.

Fourth, we verify the efficiency of our algorithm on a variety of examples generated from simulation programs for commercial buildings. While our two-level approach requires solving a sequence of coarse MILPs, numerical results indicate that the first iterate provides good approximate solutions of the semi-coarse models. Through extensive numerical experiments, we demonstrate the scalability of the two-level approach on a rich set of large problems that are beyond the capacity of state-of-the-art commercial solvers. 

\paragraph{Outline.} The remainder of this paper is organized as follows. In Section~\ref{sec.multilevel}, we describe the two-level approach for the two-stage MILPs. We show that our approach results in a tightening of the original problem and provides an upper bound on the optimal solution. In Section~\ref{sec.cogen}, we apply our approach to a complex MILP model from the cogeneration problem in buildings. We discuss how the profiles are generated and selected such that the coarsened models provide feasible solutions to~\eqref{E:MILP}. In Section~\ref{sec.results}, we demonstrate the effectiveness of our approach on a diverse set of test problems. We show that the two-level approach allows us to find good approximate solutions with a fraction of computational time compared with solving the full model. In Section~\ref{sec.concl}, we summarize our contribution and discuss future extensions. 

%%% Local Variables:
%%% TeX-master: "StochMIP-2.tex"
%%% End:
\section{Two-Level Approach to MILP}
   \label{sec.multilevel}

In this section, we derive two models that are formulated in terms of 
the first-stage variables and $n \ll N$ second-stage variables. We start by 
presenting our ideas for variable coarsening, resulting in a semi-coarse model
that has ${\cal O}(n)$ second-stage variables and ${\cal O}(N)$ second-stage constraints. Next we show how to further coarsen this model by aggregating constraints, resulting in a coarse model with both ${\cal O}(n)$ second-stage variables and constraints.

%%%%%%%%%%%%%%%%%%%%%%%%%%%%%%%%%%%%%%%%%%%%%%%%%%%%%%%%%%%%%%%%%%%%%%%%

\subsection{Primal Coarsening}

To coarsen the variables in the model, we introduce a coarsening factor $\delta \in \Z_+$
and group $N$ second-stage variables into $n = N/\delta$ equal-sized groups of size 
$\delta$ (the assumption that the groups are equal-sized is not critical): 
\begin{equation}
   x_i = \bvec x_{\delta(i-1)+1} \\ \vdots \\ x_{\delta i} \evec
   \quad \mbox{for } \; i=1, \ldots, n.
\end{equation}
We define groups for $v$ and $w$ analogously.
These groups correspond to a partition of the second-stage variables $x,v$, and $w$:
\begin{equation}
   x = \bvec x_1 \\  \vdots \\  x_n \evec, \quad
   v = \bvec v_1 \\  \vdots \\  v_n \evec, \quad
   w = \bvec w_1 \\  \vdots \\  w_n \evec.
\end{equation}
Next, we introduce $\delta$-profiles, which are fixed parameter values for
a group of variables:
\begin{equation} 
   \Xb_k \in \{0,1\}^{\delta} \; \mbox{ for } \; k=1,\ldots,K.
\end{equation}
We show in Section~\ref{sec.cogen} how these profiles are generated and selected. Figure~\ref{fig.dailyprofile} illustrates that $N=48$ hourly on/off variables can be represented by $n=2$ daily profiles with length $\delta=24$. Now for every $\Xb_k$, we collect a set of $I_k$ operational $\delta$-profiles for $\Vb$ and $\Wb$:
\begin{equation}
\label{eqn.V}
   \Vb_{jk} \in \R^{\delta}, \mbox{ with } \; L \Xb_k \leq \Vb_{jk} \leq U \Xb_k
   \; \mbox{ for } \; j=1,\ldots, I_k, \;  k=1,\ldots,K,
\end{equation}
and
\begin{equation}
    0 \leq \Wb_j \in \R^{\delta} \; \mbox{ for } \; j=1,\ldots, J,
\end{equation}
respectively.

\begin{figure}
  \centering
  \begin{tabular}{c}
  \begin{tikzpicture}
    \begin{axis}[ybar interval, width=0.7\textwidth, height=0.15\textheight, xmin=0, xmax=50, xtick=\empty, ytick={0,1}, grid=major, ylabel=On/off]
    \addplot[color=red, fill=pink, line width = 1pt] table [x = time, y = onoff] {twodayonoff.tab};
    \end{axis}
  \end{tikzpicture}  
    \\
  \begin{tikzpicture}
    \begin{axis}[width=0.7\textwidth, height=0.15\textheight, no markers, xmin=0, xmax=50, xtick={0,24,48}, ytick={0,1}, grid=major, ylabel=Profiles, xlabel=Hours]
    \addplot[const plot, color=darkgreen, line width = 1pt] table [x = time, y = base] {twodayonoff.tab};
    \end{axis}
  \end{tikzpicture}  
  \end{tabular}
  \caption{The first row shows the hourly on/off variables $x_t \in \{0,1\}$ for $48$ hours. The second row shows the profile representation with two daily profiles $\Xb_k \in \{0,1\}^{24}$.}
  \label{fig.dailyprofile}
\end{figure}
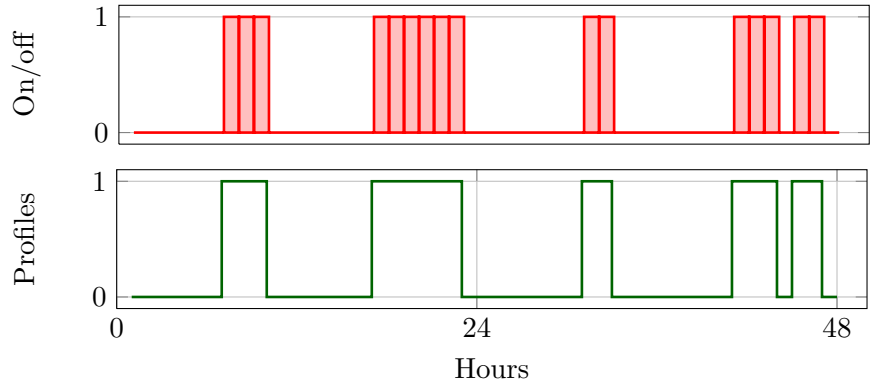

Given these sets of $\delta$-profiles, we perform a change of variables
that replaces the second-stage variables $(x,v,w)$ by a reduced set of
variables $(\xb,\vb,\wb)$ for the $\delta$-profiles,
resulting in a coarsening of the second-stage variables. In particular, we set
\begin{equation}\label{E:Xk}
   x_i = \sum_{k=1}^K \xb_{ik} \Xb_k \; , \quad  \sum_{k=1}^K \xb_{ik} \leq 1
   \; , \quad  \xb_{ik} \in \{0,1\} \quad \mbox{ for }\; i=1,\ldots, n, \; k=1,\ldots, K.
\end{equation}
Thus, we have that $\xb \in \{0,1\}^{Kn}$, and our goal is to create models where $Kn \ll N$.
Similarly, we write $v$ and $w$ as
\begin{equation}\label{E:Vk}
  v_i =  \sum_{k=1}^K \sum_{j=1}^{I_k} \vb_{ijk} \Vb_{jk} \, , 
  \quad \sum_{k=1}^K \sum_{j=1}^{I_k} \vb_{ijk} \leq 1 \, ,
  \quad \sum_{j=1}^{I_k} \vb_{ijk} = \xb_{ik} \, ,
  \quad 0 \leq \vb_{ijk} \leq 1 \, ,
  \quad \mbox{ for } i=1,\ldots, n,
\end{equation}
and 
\begin{equation}\label{E:Wk}
  w_i =  \sum_{j=1}^J \wb_{ij} \Wb_j \; , 
  \quad  \sum_{j=1}^J \wb_{ij} \leq 1  \; ,
  \quad \mbox{ for }\; i=1,\ldots, n.
\end{equation}
We note that we do not enforce $\wb_{ij} \in \{0,1\}$ or $\vb_{ijk} \in \{0,1\}$. The reason is that we wish to allow more freedom for choosing operational profiles in the second stage as long as they remain feasible. This choice also simplifies the coarsened second-stage problem and provides a valid approach if we assume that convex combinations of operational $\delta$-profiles $\Vb_{jk}$ and $\Wb_j$ are feasible, which is often the case in practice.

The partition of variables $(x,v,w)$ implies a partition of the problem matrices $B$, $C$, and $D$ of \eqref{E:MILP} as
\[
   B = [ B_1 : \cdots : B_{n} ], \quad
   C = [ C_1 : \cdots : C_{n} ], \quad
   D = [ D_1 : \cdots : D_{n} ],
\]
where $B_i$, $C_i$, and $D_i \in \R^{M \times \delta}$ for $i=1,\ldots,n$.
Next, we define $\delta$-profile matrices as
\begin{equation} \label{E:profs} 
  \Xb = [ \Xb_1 : \cdots : \Xb_K ], \quad
  \Vb = [ \Vb_{11} : \cdots : \Vb_{1I_1} : \ldots :  \Vb_{K 1} : \cdots : \Vb_{K I_K} ], \quad
  \Wb = [ \Wb_1 : \cdots :  \Wb_J],
\end{equation}
where $\Xb_k$, $\Vb_{jk}$, and $\Wb_j$ are vectors of length $\delta$. We define aggregated (coarse) matrices $\bar{B}$, $\bar{C}$, and $\bar{D} \in \R^{M \times n \delta}$ as
\begin{equation}
\label{E:aggMatrix}
   \bar{B} = [ B_1 \Xb : \cdots : B_{n} \Xb ], \quad
   \bar{C} = [ C_1 \Vb : \cdots : C_{n} \Vb ], \quad
   \bar{D} = [ D_1 \Wb : \cdots : D_{n} \Wb ] ,
\end{equation}
and aggregated cost vectors $\bar{b},\bar{c},\bar{d} \in \R^{Kn}$ as
\begin{equation}
  \label{E:aggVector}
   \bar{b} = [ b_1^T \Xb: \cdots : b_n^T \Xb ]^T, \quad
   \bar{c} = [ c_1^T \Vb: \cdots : c_n^T \Vb ]^T, \quad
   \bar{d} = [ d_1^T \Wb: \cdots : d_n^T \Wb ]^T.  
   % \bar{b} = [ X^T b_1: \cdots : X^T b_n ], \quad
   % \bar{c} = [ X^T c_1: \cdots : X^T c_n ], \quad
   % \bar{d} = [ X^T d_1: \cdots : X^T d_n ].  
\end{equation}

We obtain the following aggregated MILP, which we refer to as the {\em semi-coarse MILP},
because it has been coarsened in primal variables only:
\begin{equation}\label{E:aggMILP}
   \begin{array}{ll}
     \dps \mini_{y,\xb,\vb,\wb} & a^T y + \bar{b}^T \xb + \bar{c}^T \vb + \bar{d}^T \wb \\
     \st                 & A y + \bar{B} \xb + \bar{C} \vb + \bar{D} \wb \leq f \\
                         & \dps  \sum_{k=1}^K \xb_{ik} \leq 1 \; , \quad  \xb_{ik} \in \{0,1\} \\
                         & \dps  \sum_{k=1}^K \sum_{j=1}^{I_k} \vb_{ijk} \leq 1 \; , \quad \sum_{j=1}^{I_k} \vb_{ijk} = \xb_{ik} \, , 
                            \quad  0 \leq \vb_{ijk} \leq 1  \\
                         & \dps  \sum_{j=1}^{J} \wb_{ij} \leq 1 \; , \quad  0 \leq \wb_{ij} \leq 1.
   \end{array}
\end{equation}
The MILP \pref{E:aggMILP} has potentially many fewer variables than \pref{E:MILP},
but contains as many constraints as the original problem. Before we show how to aggregate 
constraints in the next section, we finish this section by summarizing 
some of the properties of \pref{E:aggMILP}.

\begin{proposition} \label{P:aggMILP-feas}
 Let $(\xb,\vb,\wb)$ be a feasible point of the semi-coarse model~\pref{E:aggMILP}, then it follows
    that the corresponding fine-scale variables
    \begin{equation}
    \label{E:finevars}
       x = \bvec 
             \dps \sum_{k=1}^K \xb_{1k} \Xb_k \\ \vdots \\ \dps \sum_{k=1}^K \xb_{nk} \Xb_k
           \evec
           , \quad
       v = \bvec 
             \dps \sum_{k,j=1}^{K,I_k} \vb_{1jk} \Vb_{jk} \\ \vdots \\ \dps \sum_{k,j=1}^{K,I_k} \vb_{njk} \Vb_{jk}
           \evec
           , \quad
       w = \bvec 
             \dps \sum_{j=1}^J \wb_{1j} \Wb_j \\ \vdots \\ \dps \sum_{j=1}^J \wb_{nj} \Wb_j
           \evec
    \end{equation}
    are feasible in the original MILP~\pref{E:MILP}.
\end{proposition}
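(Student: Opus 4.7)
The plan is to verify each of the four groups of constraints in~\eqref{E:MILP} directly, using the algebraic identities between aggregated and fine-scale quantities that are built into~\eqref{E:aggMatrix}--\eqref{E:aggVector}. Given $(\bar{x},\bar{v},\bar{w})$ feasible in~\eqref{E:aggMILP}, I define $(x,v,w)$ by~\eqref{E:finevars}. By construction these are block vectors whose $i$th block is a linear combination of the profile columns, so from the definitions of $\bar{B}$, $\bar{C}$, $\bar{D}$ one obtains $B x = \bar{B}\bar{x}$, $C v = \bar{C}\bar{v}$, and $D w = \bar{D}\bar{w}$. The coupling inequality $Ay + Bx + Cv + Dw \leq f$ then follows immediately from the corresponding inequality in~\eqref{E:aggMILP}.

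Next I verify the integrality $x \in \{0,1\}^N$. Since $\bar{x}_{ik} \in \{0,1\}$ and $\sum_{k=1}^K \bar{x}_{ik} \leq 1$, at most one $\bar{x}_{ik}$ equals one for each $i$, so $x_i$ equals either $\bar{X}_k$ for some $k$ or the zero vector. Because every $\bar{X}_k \in \{0,1\}^\delta$, this yields $x_i \in \{0,1\}^\delta$ and hence $x \in \{0,1\}^N$. Nonnegativity of $w$ is analogous: each $\bar{w}_{ij} \geq 0$ and each $\bar{W}_j \geq 0$, so $w_i \geq 0$.

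The one step that deserves genuine care, and which I anticipate being the main (though still modest) obstacle, is verifying the switching bounds $L x \leq v \leq U x$. The coupling constraint $\sum_{j=1}^{I_k} \bar{v}_{ijk} = \bar{x}_{ik}$ in~\eqref{E:aggMILP} is precisely what makes this go through: using it,
\begin{equation*}
   x_i \;=\; \sum_{k=1}^K \bar{x}_{ik}\,\bar{X}_k \;=\; \sum_{k=1}^K \sum_{j=1}^{I_k} \bar{v}_{ijk}\,\bar{X}_k .
\end{equation*}
Combined with the bounds $L\bar{X}_k \leq \bar{V}_{jk} \leq U\bar{X}_k$ from~\eqref{eqn.V} and the nonnegativity $\bar{v}_{ijk} \geq 0$, one gets
\begin{equation*}
   L x_i \;=\; \sum_{k,j} \bar{v}_{ijk}\,(L\bar{X}_k) \;\leq\; \sum_{k,j} \bar{v}_{ijk}\,\bar{V}_{jk} \;=\; v_i \;\leq\; \sum_{k,j} \bar{v}_{ijk}\,(U\bar{X}_k) \;=\; U x_i ,
\end{equation*}
which establishes the required bounds blockwise and hence for the whole vector.

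Putting these four checks together shows that $(y,x,v,w)$ is feasible in~\eqref{E:MILP}, completing the proof. The only subtlety is recognizing that the auxiliary equality $\sum_j \bar{v}_{ijk} = \bar{x}_{ik}$ in the semi-coarse model is not cosmetic; it is the linchpin that transfers the profile-level switching bounds~\eqref{eqn.V} into the fine-scale switching bounds of~\eqref{E:MILP}, and the remaining arguments are straightforward bookkeeping using the definitions~\eqref{E:profs}--\eqref{E:aggVector}.
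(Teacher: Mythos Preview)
Your proposal is correct and follows essentially the same argument as the paper's own proof: both verify $x\in\{0,1\}^N$ from the binary profiles and the at-most-one constraint, $w\geq 0$ from nonnegativity of profiles and coefficients, $Lx\leq v\leq Ux$ via the key equality $\sum_j \bar v_{ijk}=\bar x_{ik}$ combined with the profile bounds~\eqref{eqn.V}, and the coupling inequality via the identity $Bx=\bar B\bar x$, $Cv=\bar C\bar v$, $Dw=\bar D\bar w$. The only differences are cosmetic (ordering of the checks and slightly more explicit justification of integrality).
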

\begin{proof}
The binary constraint $x \in \{0,1\}^N$ follows from the representation~(\ref{E:Xk}) and the definition of binary vectors $\Xb_k$. Similarly, the non-negativity of $w$ is a direct consequence of non-negative profiles $\Wb_j$ and non-negative coefficients $\wb_{ij}$. To show that $v$ is feasible, we start with the definition of $\Vb_{jk}$ in~\eqref{eqn.V}:
\[
L \cdot \Xb_k  \leq  \Vb_{jk}  \leq  U \cdot \Xb_k.
\]
Since $\vb_{ijk} \geq 0$, it follows that 
\[
   L \sum_{k=1}^K \sum_{j=1}^{I_k} \vb_{ijk} \Xb_k 
     \leq 
   \sum_{k=1}^K \sum_{j=1}^{I_k} \vb_{ijk} \Vb_{jk} 
     \leq 
   U \sum_{k=1}^K \sum_{j=1}^{I_k} \vb_{ijk} \Xb_k.
\]
Using $\xb_{ik}=\sum_{j=1}^{I_k} \vb_{ijk}$, we have
\[
L \sum_{k=1}^K \xb_{ik} \Xb_k  
     \leq  v_i  \leq 
U \sum_{k=1}^K \xb_{ik} \Xb_k, \quad \mbox{ for } i=1,\ldots,n.
\]
Using the definition of $x_i$ in \eqref{E:finevars}, it follows that $L x_i \leq v_i \leq U x_i$, for $i=1,\ldots,n$, and thus $L x \leq v \leq U x$. The proof is complete by noting that 
\[
f \geq Ay + \bar{B} \xb + \bar{C} \vb + \bar{D} \wb = Ay + Bx + Cv + Dw,
\]
where the equality follows from the definition of aggregated matrices~(\ref{E:aggMatrix}) and the representation of the fine-scale variables~(\ref{E:finevars}).
\end{proof}

Next, we show that the semi-coarse model provides an upper bound.

\begin{proposition}
  \label{P:aggMILP-upper}
The semi-coarse model~\pref{E:aggMILP} is a tightening of the original MILP~\pref{E:MILP}, and its solution provides an upper bound on~\pref{E:MILP}. The two problems are equivalent if the optimal profiles from the solution of~\pref{E:MILP} are included in~\pref{E:aggMILP}.
\end{proposition}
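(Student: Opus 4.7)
The plan is to build on Proposition~\ref{P:aggMILP-feas} and to compare objective values along the map from semi-coarse variables $(\xb,\vb,\wb)$ to fine-scale variables $(x,v,w)$ given by~\pref{E:finevars}. First I would verify that under this map every feasible point of~\pref{E:aggMILP} yields a feasible point of~\pref{E:MILP} with the \emph{same} objective value, because by the aggregated cost definitions in~\pref{E:aggVector},
\[
   \bar{b}^T \xb \;=\; \sum_{i=1}^n b_i^T\!\Big(\sum_{k=1}^K \xb_{ik}\Xb_k\Big) \;=\; b^T x,
\]
and similarly $\bar{c}^T\vb = c^T v$ and $\bar{d}^T\wb = d^T w$. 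Combined with Proposition~\ref{P:aggMILP-feas}, this shows that the image of the feasible set of~\pref{E:aggMILP} is contained in the feasible set of~\pref{E:MILP}, and the two objectives agree point-wise along the map. Since both problems share the same first-stage variable $y$ and $a^T y$ cost, the optimal value of~\pref{E:aggMILP} can only be at least as large as that of~\pref{E:MILP}. This proves that the semi-coarse model is a tightening and that its optimum provides an upper bound.

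Next I would establish the equivalence claim. Let $(y^\ast,x^\ast,v^\ast,w^\ast)$ be an optimum of~\pref{E:MILP}, and split the second-stage vectors into blocks of length $\delta$. The hypothesis is that, for each $i$, the block $x_i^\ast$ equals some profile $\Xb_{k(i)}$, the block $v_i^\ast$ equals a corresponding operational profile $\Vb_{j(i),k(i)}$ (paired with $\Xb_{k(i)}$ in the sense of~\eqref{eqn.V}), and the block $w_i^\ast$ equals some profile $\Wb_{\ell(i)}$, where all three profiles have been included in the sets used to form~\pref{E:aggMILP}. I would then set $\xb_{ik}=1$ iff $k=k(i)$, $\vb_{ijk}=1$ iff $(j,k)=(j(i),k(i))$, and $\wb_{ij}=1$ iff $j=\ell(i)$, with all other coordinates zero. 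A one-line check shows that the summation, assignment, and coupling constraints $\sum_j \vb_{ijk}=\xb_{ik}$ of~\pref{E:aggMILP} are satisfied, and the induced fine-scale variables via~\pref{E:finevars} recover exactly $(x^\ast,v^\ast,w^\ast)$. Hence $(y^\ast,\xb,\vb,\wb)$ is feasible for~\pref{E:aggMILP} with objective value equal to the optimum of~\pref{E:MILP}; together with the upper bound from the previous paragraph, this forces the two optimal values to coincide.

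The main piece of bookkeeping, and the only mildly delicate step, will be ensuring the pairing between $\Xb_{k(i)}$ and $\Vb_{j(i),k(i)}$ is respected so that the linking constraint $\sum_j \vb_{ijk}=\xb_{ik}$ holds for the indicator choice above; this is why the statement speaks of ``the optimal profiles'' rather than the $x$-profiles alone. The feasibility of each aggregated linear constraint is then immediate from $\bar{B}\xb=Bx^\ast$, $\bar{C}\vb=Cv^\ast$, $\bar{D}\wb=Dw^\ast$ (the same identity used for the objective) and the fact that $(y^\ast,x^\ast,v^\ast,w^\ast)$ satisfies $Ay+Bx+Cv+Dw\le f$. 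No additional analysis is required beyond careful substitution.
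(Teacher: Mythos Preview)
Your proposal is correct and follows essentially the same approach as the paper's proof: feasibility via Proposition~\ref{P:aggMILP-feas} gives the tightening/upper bound, and then one constructs a semi-coarse point from the optimal fine-scale solution by identifying each block with an included profile to obtain equality. Your write-up is in fact more explicit than the paper's on one point the paper leaves implicit---namely that the objective values coincide under the map~\pref{E:finevars} via the identity $\bar b^T\xb = b^Tx$ (and analogously for $\vb,\wb$)---which is needed for both directions of the argument.
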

\begin{proof}
Let $z^\star_{\rm MILP}$ and $z^\star_{\rm semi}$ be the optimal value of the original MILP~(\ref{E:MILP}) and the semi-coarse model~\eqref{E:aggMILP}, respectively. Since any feasible point of the semi-coarse model~(\ref{E:aggMILP}) is a feasible point of the original MILP~(\ref{E:MILP}), it follows that~(\ref{E:aggMILP}) is a tightening of~(\ref{E:MILP}) and thus
\[
z^\star_{\rm MILP}\leq z^\star_{\rm semi} \, . 
\]
Let $(x^\star,v^\star,w^\star)$ be the optimal solution of MILP~\eqref{E:MILP}. We now extract optimal profiles $(\Xb^\star,\Vb^\star,\Wb^\star)$ corresponding to $(x^\star,v^\star,w^\star)$. Then it follows that there exists a solution of~\eqref{E:aggMILP} such that 
\begin{equation}
  \nonumber
x_i^\star = \sum_{k=1}^K \xb^\star_{ik} \Xb_k^\star, \quad
v_i^\star = \sum_{k=1}^K \sum_{j=1}^{I_k} \vb^\star_{ijk} \Vb_{jk}^\star, \quad
w_i^\star = \sum_{j=1}^J \wb^\star_{ij} \Wb_j^\star, 
\quad \mbox{ for } i=1,\ldots,n.
\end{equation}
Moreover, solution $(\xb^\star,\vb^\star,\wb^\star)$ is feasible in \eqref{E:aggMILP}, and the objective value is the same as solution $(x^\star,v^\star,w^\star)$. Because (\ref{E:aggMILP}) is a tightening of (\ref{E:MILP}), there cannot be a better solution than $(\xb^\star,\vb^\star,\wb^\star)$ that we constructed.  Hence, $(\xb^\star,\vb^\star,\wb^\star)$ is the optimal solution, and both problems (\ref{E:MILP}) and (\ref{E:aggMILP}) are equivalent.
\end{proof}

%%%%%%%%%%%%%%%%%%%%%%%%%%%%%%%%%%%%%%%%%%%%%%%%%%%%%%%%%%%%%%%%%%%%%%%%
\subsection{Dual Coarsening}

We next coarsen the constraints by partitioning them into $n$ groups of 
size $\delta$ and summing over each group. We define the rows of $A$ as $A_i^T$,
and we define a set of aggregated constraints by summing over the rows of $A$ within
each group:
\[ \hat{A} := \bvec 
                 \dps \sum_{i=1}^{\delta} A_{i}^T \\
                 \dps \sum_{i=1}^{\delta} A_{\delta + i}^T \\
                 \vdots \\
                 \dps \sum_{i=1}^{\delta} A_{\delta (n-1) + i}^T 
                \evec .
\]
We aggregate the constraint matrices $\bar{B}$, $\bar{C}$, and $\bar{D}$ in a similar way
\[ \hat{\bar{B}} := \bvec 
                 \dps \sum_{i=1}^{\delta} \bar{B}_{i}^T \\
                 \dps \sum_{i=1}^{\delta} \bar{B}_{\delta + i}^T \\
                 \vdots \\
                 \dps \sum_{i=1}^{\delta} \bar{B}_{\delta (n-1) + i}^T 
                \evec , \quad
    \hat{\bar{C}} := \bvec 
                 \dps \sum_{i=1}^{\delta} \bar{C}_{i}^T \\
                 \dps \sum_{i=1}^{\delta} \bar{C}_{\delta + i}^T \\
                 \vdots \\
                 \dps \sum_{i=1}^{\delta} \bar{C}_{\delta (n-1) + i}^T 
                \evec  , \quad
    \hat{\bar{D}} := \bvec 
                 \dps \sum_{i=1}^{\delta} \bar{D}_{i}^T \\
                 \dps \sum_{i=1}^{\delta} \bar{D}_{\delta + i}^T \\
                 \vdots \\
                 \dps \sum_{i=1}^{\delta} \bar{D}_{\delta (n-1) + i}^T 
                \evec .
\]
Note that we use ``bar'' notation to denote coarsening in variables and ``hat'' notation to denote coarsening in constraints. We coarsen the right-hand side $f$ analogously and obtain the {\em coarse MILP}:
\begin{equation}\label{E:coarseMILP}
   \begin{array}{ll}
     \dps \mini_{v,w,x,y} & a^T y + \bar{b}^T \xb + \bar{c}^T \vb + \bar{d}^T \wb \\
     \st                 & \hat{A} y + \hat{\bar{B}} \xb + \hat{\bar{C}} \vb + \hat{\bar{D}} \wb \leq \hat{f} \\
                         & \dps \sum_{k=1}^K \xb_{ik} \leq 1 \; , \quad  \xb_{ik} \in \{0,1\} \\
                         & \dps \sum_{k=1}^K \sum_{j=1}^{I_k} \vb_{ijk} \leq 1 \; , \quad \sum_{j=1}^{I_k} \vb_{ijk} = \xb_{ik} \, , 
                            \quad  0 \leq \vb_{ijk} \leq 1 \\
                         & \dps  \sum_{j=1}^{J} \wb_{ij} \leq 1 \; , \quad  0 \leq \wb_{ij} \leq 1.
   \end{array}
\end{equation}

It follows easily that \pref{E:coarseMILP} is a relaxation of \pref{E:aggMILP},
because we have simply aggregated the constraints. We can use this fact to 
develop a simple algorithm that solves \pref{E:aggMILP} by solving a sequence of
tighter relaxations. The main idea is that after solving a relaxation  \pref{E:coarseMILP}
we can check whether all constraints in \pref{E:aggMILP} are satisfied and add any
violated constraints to \pref{E:coarseMILP}. It follows easily that this algorithm is finite, 
because after finitely many constraints have been added to \pref{E:coarseMILP},
it is equivalent to \pref{E:aggMILP}. 

In practice, however, solving a sequence of MILPs \pref{E:coarseMILP} may not be efficient, because MILPs do not warm-start. Instead, we solve the LP-relaxation of the coarse model~\eqref{E:coarseMILP} and add violated constraints until all constraints in the LP-relaxation of the semi-coarse model~\eqref{E:aggMILP} are satisfied. We use the identified constraints as the initial set of constraints for the MILP coarse model iterations. We summarize this procedure in Algorithm~\ref{alg.coarseMILP}.

\begin{algorithm}[htb]
\SetAlgoVlined
\begin{center}
{\bf LP Warm-start Phase}
\end{center}
Set $l \gets 0$\;
Solve the LP-relaxation of the coarse MILP~\pref{E:coarseMILP} to get solution $(y^0_{\rm LP},\xb^0_{\rm LP}, \vb^0, \wb^0)$\;
\While{$(y^l_{\rm LP},\xb^l_{\rm LP}, \vb^l, \wb^l)$ is not feasible for the LP-relaxation of the semi-coarse MILP \pref{E:aggMILP}}{
  Find constraints that are violated in the LP-relaxation of \pref{E:aggMILP} and add them to \pref{E:coarseMILP}\;
  Solve the LP-relaxation of \pref{E:coarseMILP} with the added constraints to get $(y^{l+1}_{\rm LP},\xb^{l+1}_{\rm LP}, \vb^{l+1}, \wb^{l+1})$\;
  Set $l = l+1$ \;
} 
\begin{center}
{\bf MILP Phase}  
\end{center}
Set $k \gets 0$\;
Solve the coarse MILP~\pref{E:coarseMILP} with additional constraints added in the {\bf LP Warm-start Phase} to get solution $(y^0,\xb^0, \vb^0, \wb^0)$\;
\While{$(y^k,\xb^k, \vb^k, \wb^k)$ is not feasible for \pref{E:aggMILP}}{
  Find constraints that are violated in \pref{E:aggMILP} and add them to \pref{E:coarseMILP}\;
  Solve the coarse MILP~\pref{E:coarseMILP} with the added constraints to get $(y^{k+1},\xb^{k+1}, \vb^{k+1}, \wb^{k+1})$\;
  Set $k = k+1$ \;
} % end while
\caption{Solve the semi-coarse MILP \pref{E:aggMILP} via a sequence of coarse MILPs \pref{E:coarseMILP} using the LP-relaxation as warm-start.}
\label{alg.coarseMILP}
\end{algorithm}

\begin{proposition}
  \label{P:coarseMILP}
When Algorithm \ref{alg.coarseMILP} terminates, the solution $(y^k,\xb^k, \vb^k, \wb^k)$ of the coarse model \pref{E:coarseMILP} with added constraints is the solution of the semi-coarse model \pref{E:aggMILP}. 
\end{proposition}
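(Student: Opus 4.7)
The plan is to use the standard cutting-plane argument: a feasible point of the original problem that attains the optimal value of a relaxation must itself be optimal. First I would observe that the coarse MILP~\pref{E:coarseMILP} is a relaxation of the semi-coarse MILP~\pref{E:aggMILP}, since every constraint of~\pref{E:coarseMILP} is obtained by summing a subset of rows of~\pref{E:aggMILP}, and any point feasible in~\pref{E:aggMILP} trivially satisfies such sums. Moreover, every constraint added during either phase of Algorithm~\ref{alg.coarseMILP} is, by construction, a constraint of~\pref{E:aggMILP}; hence the augmented problem solved at iteration $k$, call it~\pref{E:coarseMILP}$_k$, still has a feasible region that contains the feasible region of~\pref{E:aggMILP}. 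In particular, its optimal value $z_k$ is a lower bound on the optimal value $z^\star_{\rm semi}$ of~\pref{E:aggMILP}.

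Next I would invoke the termination criterion. When Algorithm~\ref{alg.coarseMILP} terminates at iteration $k$, the returned point $(y^k,\xb^k,\vb^k,\wb^k)$ satisfies all constraints of~\pref{E:aggMILP} by the while-loop condition, and it satisfies the remaining constraints of~\pref{E:aggMILP} (binary, non-negativity, and simplex-type constraints) because these are already present in every~\pref{E:coarseMILP}$_k$. Hence $(y^k,\xb^k,\vb^k,\wb^k)$ is feasible for~\pref{E:aggMILP}, so its objective value is at least $z^\star_{\rm semi}$. Combined with the relaxation bound
\begin{equation*}
z_k \;=\; a^T y^k + \bar b^T \xb^k + \bar c^T \vb^k + \bar d^T \wb^k \;\le\; z^\star_{\rm semi},
\end{equation*}
we get equality, which shows that $(y^k,\xb^k,\vb^k,\wb^k)$ is optimal for the semi-coarse MILP~\pref{E:aggMILP}.

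Finally, although finiteness is not part of the statement itself, I would include a short remark (as already sketched in the text preceding the proposition) that the MILP phase must terminate in finitely many iterations, because~\pref{E:aggMILP} has only finitely many constraints and each outer iteration adds at least one previously absent constraint; once all constraints have been added, the augmented coarse model coincides with the semi-coarse model and the while-loop condition fails. The LP warm-start phase is irrelevant for correctness; it only seeds the MILP phase with a useful pool of constraints.

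The main obstacle, such as it is, is just to be careful that constraints produced in the LP warm-start phase are legitimate constraints of~\pref{E:aggMILP} (not LP-only artifacts) so that feeding them into the MILP phase preserves the relaxation property; this follows directly from the fact that the separation step picks violated constraints of~\pref{E:aggMILP} itself, not of its LP relaxation.
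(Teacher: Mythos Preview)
Your proposal is correct and follows the same relaxation-plus-feasibility argument as the paper; the paper compresses this into a single sentence (``minimizes the objective function \dots\ and satisfies all constraints \dots, hence optimal''), whereas you spell out the lower-bound/upper-bound sandwich explicitly and add the careful check that LP-phase cuts are genuine constraints of~\pref{E:aggMILP}. The finiteness remark you include is also exactly the observation the paper makes in the paragraph preceding the proposition.
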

\begin{proof}
Since $(y^k,\xb^k, \vb^k, \wb^k)$ minimizes the objective function of the semi-coarse model~\pref{E:aggMILP} and satisfies all constraints~\pref{E:aggMILP}, it is the optimal solution of~\pref{E:aggMILP}.
\end{proof}

We show in Section~\ref{sec.results} that our approach in Algorithm~\ref{alg.coarseMILP} is advantageous; in particular, it significantly reduces the number of MILP re-solves, as opposed to the case without LP-relaxation as warm-start. We conclude this section by providing a matrix representation of the two-level approach using Kronecker products. Recall that for two generic matrices $E \in \R^{n\times m}$ and $F \in \R^{p\times q}$, their Kronecker product is defined as
\[
E \otimes F 
 = 
\left[
  \begin{array}{ccc}
    e_{11} F & \cdots & e_{1m} F \\
    \vdots  & \ddots & \vdots  \\
    e_{n1} F & \cdots & e_{nm} F
  \end{array}
\right]
\in \R^{np \times mq}.
\]
Using \pref{E:profs}, we can write 
\[ \bar{B} = B (I \otimes \Xb), \quad \bar{C} = C (I \otimes \Vb), \quad \bar{D} = D (I \otimes \Wb) \]
where $I$ is the identity matrix. Similarly, if we let ${\bf 1} = (1,\ldots,1) \in \R^{\delta}$ be the row vector of all ones of length $\delta$, then we can express
\[ \hat{\bar{B}} = (I \otimes {\bf 1}) \bar{B} =  (I \otimes {\bf 1}) B (I \otimes \Xb), \quad 
   \hat{\bar{C}} = (I \otimes {\bf 1}) \bar{C} =  (I \otimes {\bf 1}) C (I \otimes \Vb), \quad 
   \hat{\bar{D}} = (I \otimes {\bf 1}) \bar{D} =  (I \otimes {\bf 1}) D (I \otimes \Wb),
\]
and 
\[ \hat{A} = (I \otimes {\bf 1}) A, \quad \hat{f} = (I \otimes {\bf 1}) f. \]

%%% Local Variables:
%%% TeX-master: "StochMIP-2.tex"
%%% End:
\section{Application to Cogeneration for Buildings}
  \label{sec.cogen}

In this section, we apply our two-level approach to the cogeneration problem for buildings. Our MILP model~\eqref{eqn.model} is adapted from models for cogeneration in commercial buildings~\citep{rengao10, prubranew13a}. In particular, we take linearized models for fuel cells and water tank storage from the work of~\citet{prubranew13a}, and we penalize on/off operations using switching cost as done by~\citet{rengao10}. While the two-level framework described in Section~\ref{sec.multilevel} applies to generic MILPs~\eqref{E:MILP}, the MILP model~\eqref{eqn.model} for cogeneration entails several complex constraints as discussed in Section~\ref{sec.semicoarse}. As a result, additional work is required to construct appropriate semi-coarse and coarse models. 

We note that our MILP model~\eqref{eqn.model} for the cogeneration problem has the following features that are not included in existing models. First, our model has a large number of binary variables, on the order of ${\cal O}(10^6)$, in the second stage. This is orders of magnitude larger than the number of binary variables of~\citet{sidmarbailac05}, \citet{stamarsidlaicofaki09}, \citet{rengao10}, and \citet{prubranew13a}. This modeling feature allows considerably more degrees of freedom for on/off operation during the life time of new technologies. Second, our model contains three sets of coupling constraints that (i) couple first- and second-stage binary variables, (ii) couple second-stage variables for different technologies, and (iii) couple second-stage variables over a long time horizon (e.g., 10-20 years). The number of coupling constraints is on the same order of variables, namely, ${\cal O}(10^6)$. This makes the problem significantly harder because it does not lend itself to decomposition techniques such as Lagrangian relaxation. 

In what follows, we describe the main characteristics of the MILP model (Section~\ref{sec.model}), derive the semi-coarse and coarse models (Sections~\ref{sec.semicoarse} and~\ref{sec.coarse}), and discuss profile generation and selection (Section~\ref{sec.profgen}).

\subsection{MILP Model} 
 \label{sec.model}

The cogeneration problem consists of two components: the investment decision and the operation planning. The investment decision concerns what new technologies to purchase, while the operation planning concerns how to dispatch units over a long-term period (e.g., $10$-$20$ years). We formulate a two-stage MILP, where the first-stage variables model investment decisions and the second-stage variables model equipment operations. Note that both the first and second stages contain integer variables. In particular, on/off operations for new technologies in second-stage are made on an hourly basis.

Following~\citet{sidmarbailac05, stamarsidlaicofaki09, rengao10, prubranew13a}, we consider five technologies: batteries, boilers, solid-oxide fuel cells (SOFCs), combined heat and power (CHP) SOFCs, and water tank storage. The selection criterion is based on the installation, operation, maintenance, fuel consumption, and carbon emission costs, while meeting electricity and heating demands. The detailed MILP model is given in~\eqref{eqn.model}.  

\subsection{Semi-Coarse Model} \label{sec.semicoarse}

We begin by introducing the profiles for variables. We denote profiles by upper-case letters with the bar notation on top: $\bar{X}_{jk} \in \cP_o$ are the on/off profiles for technology $j$, $\bar{P}_{jkl} \in \cP_p$ are the production profiles associated with $\bar{X}_{jk}$, $\bar{U}_k \in \cP_u$ are the profiles for power purchased from the utility, $\bar{Q}_k \in \cP_q$ are the heat generation profiles from boilers, $\bar{B}_k, \bar{B}_k^{\rm IO} \in \cP_b$ are the power storage and input/output profiles for batteries, and $\bar{S}_k, \bar{S}_k^{\rm out} \in \cP_s$ are the heat storage and output profiles for water tanks, where $\cP_o, \cP_p, \cP_u, \cP_q, \cP_b$, and $\cP_s$ are the corresponding set of profiles. We denote the $h$th element of a profile by $\bar{X}_{jk}(h)$ for $h \in \{1,\ldots,\delta\}$. Time-dependent parameters $D_t^P$ and $D_t^Q$ and discount factors $Y_t$ are concatenated into profiles $\bar{D}_d^P,\bar{D}_d^Q$, and $\bar{Y}_d \in \R^{\delta}$. 

As described in Section~\ref{sec.multilevel}, the variables in the original MILP model are represented by using profiles in the coarsened models. In particular, binary variables $x_{ijt}$ are coarsened to profiles $\bar{X}_{jk}$, and continuous variables $p_{ijt},u_t,b_t,s_t,s_t^{\rm out}$, and $q_{t}$ are coarsened to profiles $\bar{P}_{jkl},\bar{U}_k,\bar{B}_k,\bar{S}_k$, and $\bar{S}_k^{\rm out},\bar{Q}_k \geq 0$, respectively.  The coefficients for profiles are denoted by lower-case letters with a bar on top; for example, $\bar{x}_{ijdk} \in \{0,1\}$ indicates whether profile $k$ is selected on day $d$ for unit $i$ of technology $j$. As modeled in \eqref{eqn.setppcx}, no more than one on/off profile can be selected for fixed $i,j$, and $d$. This, in conjunction with binary elements of $\bar{X}_{jk}$, results in binary-valued $x_{ijt}$. Similarly, non-negative profiles  $\bar{P}_{jkl},\bar{U}_k,\bar{B}_k,\bar{S}_k,\bar{S}_k^{\rm out}$, and $\bar{Q}_k \geq 0$ and their non-negative coefficients in \eqref{eqn.setppcv}, \eqref{eqn.boundsw}, and \eqref{eqn.setppcw} result in non-negative $p_{ijt},u_t,b_t,s_t,s_t^{\rm out}$, and $q_{t}$.

We next turn to constraints that  {\em do not\/} couple variables in time, namely, \eqref{eqn.PowerDemand}, \eqref{eqn.OnOff}, \eqref{eqn.symbrk}, \eqref{eqn.maxpower}, \eqref{eqn.batterybnd}, \eqref{eqn.HeatDemand}, \eqref{eqn.storbnd}, and \eqref{eqn.boilbnd}. The profile representation for these constraints is straightforward; one simply substitutes the summation of profiles as shown in \eqref{eqn.semi-powerdemand}, \eqref{eqn.semi-cap-sofc}, \eqref{eqn.semi-symbrk}, \eqref{eqn.semi-maxdemand}, \eqref{eqn.semi-cap-batt}, \eqref{eqn.semi-HeatDemand}, \eqref{eqn.semi-cap-stor}, and \eqref{eqn.semi-cap-boil}, respectively. Note that all inequalities in \eqref{eqn.semicoarse} are elementwise inequalities. We do not include the production constraint \eqref{eqn.powergen} in the semi-coarse model. Instead, we require that the production profile $\bar{P}_{jkl}$ associated with the on/off profile $\bar{X}_{jk}$ satisfy 
\begin{equation}
  \label{eqn.OnOffProf}
  R_j^{\rm min} \bar{X}_{jk} \leq \bar{P}_{jkl} \leq R_j^{\rm max} \bar{X}_{jk}.
\end{equation}
This approach is justified because \eqref{eqn.powergen} follows by construction due to the convex combination of coefficients for production profiles in \eqref{eqn.setppcv}. By an analogous argument, $s_t^{\rm out} \leq s_{t}$ in \eqref{eqn.storbnd} is a direct consequence of the following condition imposed on profiles:
\begin{equation}
  \label{eqn.storageBND}
  \bar{S}_k^{\rm out} \leq  \bar{S}_k. %, \quad h=1,\ldots,\delta.
\end{equation}

We next discuss how to coarsen variables that are coupled over time periods. The boundary conditions~\eqref{eqn.batteryboundary} and \eqref{eqn.storbnd} can be expressed as \eqref{eqn.semi_batt} and \eqref{eqn.semi-heatbnd}. The maximum power purchased constraint \eqref{eqn.maxpower} can be rewritten as \eqref{eqn.semi-maxdemand}, where ${\bf 1}$ denotes the vector of all ones with length $\delta$. We next turn to switching and storage constraints \eqref{eqn.switching}, \eqref{eqn.battery}, and \eqref{eqn.storage}, whose profile representation needs additional notation. Given an on/off profile $\bar{X}_{jk} \in \{0,1\}^\delta$, we can construct the associated switching profile 
\begin{equation}
\label{eqn.switchprof}
\bar{W}_{jk}(h)  =  
|\bar{X}_{jk}(h+1) - \bar{X}_{jk}(h)|.
\end{equation}
Now, the switching cost can be included in the objective function in~\eqref{eqn.semicoarse}. To deal with the battery storage constraint \eqref{eqn.battery} that couples $b_t$ and $b_t^{\rm IO}$ over the horizon $T$, we require that the pair of profiles $(\bar{B}_k$, $\bar{B}_k^{\rm IO})$ satisfy
\begin{equation}
  \label{eqn.batteryIO}
  \bar{B}_k(h+1) = (1-L^P) \bar{B}_k(h) + \bar{B}_k^{\rm IO}(h), \quad h=1,\ldots,\delta-1.
\end{equation}
We assign the same coefficient $\bar{b}_{dk}$ to both sets of profiles $\bar{B}_k$ and $\bar{B}_k^{\rm IO}$ throughout~\eqref{eqn.semicoarse}. It follows that \eqref{eqn.battery} is satisfied except for hours between profiles, namely, $t = d \delta$ for $d < |\cD|$. Therefore, we introduce constraint \eqref{eqn.semi-midnights} to guarantee that \eqref{eqn.battery} holds for $t = d \delta$ for $d < |\cD|$. We will show in Section~\ref{sec.profgen} how to generate profiles that satisfy~\eqref{eqn.OnOffProf}, \eqref{eqn.storageBND}, and \eqref{eqn.batteryIO}. The heat storage constraint \eqref{eqn.storage} for $j={\rm chp}$ can be expressed as 
\[
            \sum_{k \in \cP_s} \bar{s}_{dk} 
            \left\{ 
              \big[ S_\delta - (1-L^Q)I_\delta \big] \bar{S}_{k}
              +  \bar{S}^{\rm out}_{k}
            \right\} 
            + \sum_{k \in \cP_s} \bar{s}_{(d+1)k} E_\delta \bar{S}_{k}       
            \leq (E_{j}^Q / E_{j}^P) \sum_{i,k,l} \bar{p}_{ijdkl} \bar{P}_{jkl}, ~ d < |\cD|,
\]
where $I_\delta \in \R^{\delta \times \delta}$ is the identity matrix, $E_{\delta1} \in \R^{\delta \times \delta}$ is a matrix with $1$ in the $(\delta,1)$ entry, and $S_\delta \in \R^{\delta \times \delta}$ is a Toeplitz matrix with only nonzero elements being $1$ at the first upper-subdiagonal. 

\subsection{Coarse Model} \label{sec.coarse}

We next turn to the coarse model. Note that constraints in the semi-coarse model \eqref{eqn.semicoarse} are elementwise equalities or inequalities involving daily profiles. We aggregate hourly constraints by summing the elements of daily profiles. Using the hat notation on the top to denote the element-wise summation of profiles, for example, $\hat{\bar{P}}_{jkl} = \sum_{h=1}^\delta \bar{P}_{jkl}(h)$, we coarsen $\delta$ hourly constraints into a single constraint. For example, the hourly demand constraint~\eqref{eqn.semi-powerdemand} is replaced by the daily demand constraint~\eqref{eqn.coarse-powerdemand}. Similarly, the maximum hourly power demand \eqref{eqn.semi-maxdemand} is aggregated into the maximum daily demand \eqref{eqn.coarse-maxdemand}. The symmetric breaking constraint \eqref{eqn.coarse-symmbrk} has the interpretation that the number of times unit $i+1$ is turned on is no greater than the number of times unit $i$ is turned on in day $d$. The capacity constraints for technologies \eqref{eqn.coarse-cap-sofc}, \eqref{eqn.coarse-cap-batt}, \eqref{eqn.coarse-cap-stor}, and \eqref{eqn.coarse-cap-boil} imply that the daily power/heat output is bounded by the daily capacity of the technology units purchased at the first-stage. Since \eqref{eqn.semi-midnights}, \eqref{eqn.semi_batt}, and \eqref{eqn.semi-heatbnd} are scalar constraints themselves, no aggregation is applied to \eqref{eqn.coarse-batt-midnight}, \eqref{eqn.coarse-batt-bnd}, \eqref{eqn.coarse-stor-bnd} in the coarse model. Also, since the coefficients of profiles are the same for both semi-coarse and coarse models, \eqref{eqn.coarse-setppcx}, \eqref{eqn.coarse-setppcv}, and \eqref{eqn.coarse-setppcw} stay the same as \eqref{eqn.setppcx}, \eqref{eqn.setppcv}, and \eqref{eqn.setppcw} in the semi-coarse model.

\subsection{Profile Generation and Selection} \label{sec.profgen}

Recall that the on/off profiles $\bar{X}_{jk}$ must have binary elements and the profiles $\bar{P}_{jkl}$, $\bar{Q}_k$, $\bar{U}_k$, $\bar{B}_k$, $\bar{S}_k$, and $\bar{S}^{\rm out}_k$ must have non-negative elements. In addition, we impose constraints \eqref{eqn.OnOffProf}, \eqref{eqn.storageBND}, and \eqref{eqn.batteryIO} in formulating the semi-coarse model in Section~\ref{sec.semicoarse}. In this section, we discuss how to generate valid profiles, and we provide suggestions for profile selections.

One approach to generating profiles that satisfy the above constraints is as follows. We solve a number of small instances of the original MILP \eqref{eqn.model}, take snapshots of the second-stage solutions, and extract profiles from these snapshots. For example, consider a four-day MILP~\eqref{eqn.model}; that is, the time horizon in the second-stage problem has only four days. Solving such a four-day model, we have four snapshots of daily operation and production; in particular, we have four sets of on/off profiles for $x_{ijt} \in \{0,1\}$ and four sets of profiles $p_{ijt}$, $u_t$, $b_t$, $s_t$, $s_t^{\rm out}$, and $q_{jt} \geq 0$. We will show in Proposition~\ref{pro.movinghorizon} that these are valid profiles for the semi-coarse model~(\ref{eqn.semicoarse}).

The remaining question is how to choose the short-horizon MILPs. Our objective is to generate a rich set of profiles that are representative of the optimal solutions for a long-horizon MILP \eqref{eqn.model}. To this end, we borrow the {\em moving-horizon approach\/} from model predictive control~\citep{garpremor89,allzhe00}. Given a long-horizon MILP, the idea is to solve MILPs over a short window, roll the window forward, and re-solve the new MILP until the window reaches the end of the horizon; see Figure~\ref{fig.rollinghorizon} for an illustration. We summarize this approach and provide additional details in Algorithm~\ref{alg.movinghorizon}. 

\begin{algorithm}[htb]
 \KwData{The parameters for MILP \eqref{eqn.model} with a horizon of $D$ days, and a window of $w$ days with $w \ll D$.} 
 \KwResult{Profiles $\{\bar{P}_{jkl}$, $\bar{Q}_k$, $\bar{U}_k$, $\bar{B}_k$, $\bar{S}_k$, $\bar{S}^{\rm out}_k \}$ $\in \R_+^\delta$, $\bar{B}_k^{\rm IO} \in \R^\delta$, and $\{\bar{X}_{jk}, \bar{W}_{jk}\} \in \{0,1\}^\delta$.}
\SetAlgoVlined
Set $k \gets 0$, ${\cal W} = \{ 1, \ldots, \delta w \}$, ${\cal R} = \{ 1, \ldots, \delta \}$\;
\While{$k+w \leq D$}{
  Solve MILP~\pref{eqn.model} in the current window $t \in {\cal W}$\;
  Take snapshots of solutions $\{x_{ijt}$, $p_{ijt}$, $q_{t}$, $u_t$, $b_t$, $b_t^{\rm IO}$, $s_t$, $s_t^{\rm out} \}$ in $t \in {\cal R}$\; 
  Extract profiles 
$\{ \bar{X}_{jk}$,  $\bar{P}_{jkl}$, $\bar{Q}_k$, $\bar{U}_k$, $\bar{B}_k$,  $\bar{B}_k^{\rm IO}$, $\bar{S}_k$, $\bar{S}^{\rm out}_k \}$,
and $\bar{W}_{jk} \in \{0,1\}^\delta$ using \eqref{eqn.switchprof}\; 
  Roll the window forward by setting $k \gets k+1$\;
  {\bf if}~ $k+w < D$ ~{\bf then}~ Set ${\cal W} = \{ \delta k + 1, \ldots, \delta (k+w) \}$, ${\cal R} = \{ \delta k + 1, \ldots, \delta (k+1) \}$\;
  {\bf if}~ $k+w = D$ ~{\bf then}~ Set ${\cal W} = {\cal R} = \{ \delta k + 1, \ldots, \delta H \}$.
} 
\caption{Moving horizon method for profile generation.}
\label{alg.movinghorizon}
\end{algorithm}

\begin{figure}
  \centering
      \includegraphics[width=0.75\textwidth]{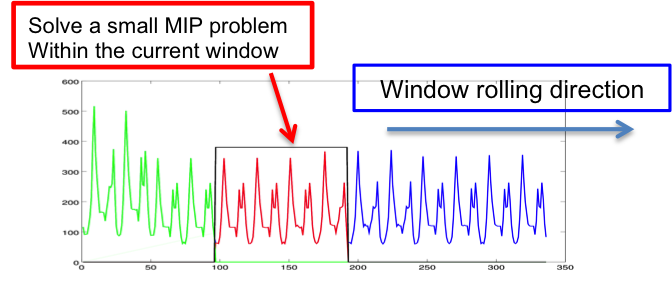}        
  \caption{Illustration of the moving horizon method in Algorithm~\ref{alg.movinghorizon}.}
  \label{fig.rollinghorizon}
\end{figure}

\begin{proposition}
  \label{pro.movinghorizon}
 The moving-horizon method described in Algorithm~\ref{alg.movinghorizon} generates non-negative profiles $\{ \bar{P}_{jkl}$, $\bar{Q}_k$, $\bar{U}_k$, $\bar{B}_k$, $\bar{S}_k$, $\bar{S}^{\rm out}_k \}$ $\in \R_+^\delta$ and binary profiles $\{\bar{X}_{jk}, \bar{W}_{jk}\} \in \{0,1\}^\delta$. Moreover, the profile pairs $\{\bar{X}_{jk},\bar{P}_{jkl}\}$, $\{\bar{S}_k^{\rm out}, \bar{S}_k\}$, and $\{\bar{B}_k, \bar{B}_k^{\rm IO}\}$ satisfy \eqref{eqn.OnOffProf}, \eqref{eqn.storageBND}, and \eqref{eqn.batteryIO}, respectively.
\end{proposition}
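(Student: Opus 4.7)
The plan is to verify each asserted property by observing that the profiles are snapshots of variables in a feasible solution of the original MILP~\eqref{eqn.model}, and hence they inherit the relevant bounds and recurrence relations directly from the model's constraints. Since the window $\cW$ contains $\delta w$ consecutive hours and the recording interval $\cR$ is a length-$\delta$ sub-window fully contained in $\cW$, every profile of length $\delta$ consists of contiguous time-indexed values from a feasible solution, and the constraints that couple consecutive hours (indices $h$ and $h+1$ for $h<\delta$) remain entirely internal to the recording interval.

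First I would dispatch the variable-type claims. The on/off profile $\bar X_{jk}$ is a restriction of $x_{ijt}\in\{0,1\}^N$ to the indices in $\cR$, so it lies in $\{0,1\}^\delta$. The switching profile $\bar W_{jk}$ defined by \eqref{eqn.switchprof} is the elementwise absolute difference of two binary entries, hence lies in $\{0,1\}^\delta$. The remaining profiles $\bar P_{jkl}$, $\bar Q_k$, $\bar U_k$, $\bar B_k$, $\bar S_k$, and $\bar S_k^{\rm out}$ are restrictions to $\cR$ of second-stage variables that are constrained to be non-negative in the original MILP (the non-negativity is imposed via \eqref{eqn.boundsw} and the analogous bounds for the storage/utility variables), so they lie in $\R_+^\delta$.

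Next I would verify the three coupling relations. For \eqref{eqn.OnOffProf}, I invoke the production constraint \eqref{eqn.powergen}, $R_j^{\rm min}x_{ijt}\le p_{ijt}\le R_j^{\rm max}x_{ijt}$, which holds at every time index in $\cR$; restricting both sides to $\cR$ yields $R_j^{\rm min}\bar X_{jk}\le \bar P_{jkl}\le R_j^{\rm max}\bar X_{jk}$ componentwise. For \eqref{eqn.storageBND}, the elementwise inequality $s_t^{\rm out}\le s_t$ from \eqref{eqn.storbnd} immediately restricts to $\bar S_k^{\rm out}\le \bar S_k$. For \eqref{eqn.batteryIO}, the battery dynamics \eqref{eqn.battery} give $b_{t+1}=(1-L^P)b_t+b_t^{\rm IO}$ for every $t$; because $\cR=\{\delta k+1,\ldots,\delta(k+1)\}$ has all consecutive index pairs $(t,t+1)$ with $t,t+1\in\cW$ for $t<\delta(k+1)$, the recurrence transfers to $\bar B_k(h+1)=(1-L^P)\bar B_k(h)+\bar B_k^{\rm IO}(h)$ for $h=1,\ldots,\delta-1$.

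The main subtlety, and the only point worth being careful about, is making sure the coupled profile pairs $\{\bar S_k,\bar S_k^{\rm out}\}$ and especially $\{\bar B_k,\bar B_k^{\rm IO}\}$ are extracted from the \emph{same} snapshot so that the hour-to-hour recurrence is genuinely inherited; this is exactly how Algorithm~\ref{alg.movinghorizon} extracts them in a single pass over $\cR$. Note that the boundary recurrence linking hour $\delta$ of one day to hour $1$ of the next (i.e.\ the case $h=\delta$) is intentionally not claimed here: it is handled separately in the semi-coarse model by the midnight constraint \eqref{eqn.semi-midnights}. With this observation the verification of all six parts is complete and the proposition follows.
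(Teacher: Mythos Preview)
Your proposal is correct and follows essentially the same approach as the paper: the profiles are snapshots of feasible solutions of the full MILP~\eqref{eqn.model}, so they inherit the relevant sign, integrality, and coupling constraints directly from \eqref{eqn.powergen}, \eqref{eqn.storbnd}, and \eqref{eqn.battery}. One small correction: the non-negativity of the continuous second-stage variables in the full model comes from \eqref{eqn.positive} (and the bounds in \eqref{eqn.maxpower}, \eqref{eqn.batterybnd}, \eqref{eqn.storbnd}), not from \eqref{eqn.boundsw}, which is a constraint of the semi-coarse model rather than the model being solved in Algorithm~\ref{alg.movinghorizon}.
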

\begin{proof}
Non-negativity of the profiles $\{\bar{P}_{jkl}$, $\bar{Q}_k$, $\bar{U}_k$, $\bar{B}_k$, $\bar{S}_k$, $\bar{S}^{\rm out}_k\}$ follows from the fact that the second-stage solutions $\{p_{ijt}$, $q_{t}$, $u_t$, $b_t$, $s_t$, $s_t^{\rm out}\}$ of the original MILP model~(\ref{eqn.model}) are non-negative. Similarly, $\bar{X}_{jk} \in \{0,1\}^\delta$ follows from $x_{ijt} \in \{0,1\}$; and $\bar{W}_{jk}$, constructed from~\eqref{eqn.switchprof}, is elementwise binary. Since the production variable $p_{ijt}$ and the on/off variable $x_{ijt}$  satisfy \eqref{eqn.powergen}, it follows that $\{\bar{X}_{jk},\bar{P}_{jkl}\}$ satisfies \eqref{eqn.OnOffProf}. Since the power storage $b_t$ and power input/output $b_t^{\rm IO}$ satisfy \eqref{eqn.battery} and since the heat storage $s_t$ and heat output $s_t^{\rm out}$ satisfy \eqref{eqn.storage}, we conclude that \eqref{eqn.storageBND} and \eqref{eqn.batteryIO} follow by construction. 
\end{proof}

Algorithm~\ref{alg.movinghorizon} potentially generates a huge number of profiles; hence, solving~\eqref{eqn.semicoarse} and \eqref{eqn.coarse} with all generated profiles may be prohibitive. Instead, we select on/off profiles $\bar{X}_{jk}$ that appear most frequently in the generated profile pool. Since the aim of the two-level approach is to reduce the problem size, it is desired that the number of on/off profiles $k$ is much smaller than the length of profiles $\delta$ (e.g., $k \approx \delta/10$). For the production profiles $\bar{P}_{jkl}$ associated with each $\bar{X}_{jk}$, we choose production profiles that have the minimum or maximum total production $\sum_{h=1}^\delta \bar{P}_{jkl}(h)$. These extreme profiles provide an envelope of other profiles; thus, their convex combination~\eqref{eqn.setppcv} provides a good range of profiles for selection. Similarly, profiles with the minimum and maximum sum of absolute values are chosen for the battery storage $\bar{B}_k$, the heat storage $\bar{S}_k$, the battery input/output $\bar{B}^{\rm IO}_k$, and the heat output $\bar{S}_k^{\rm out}$. Further, profiles for the power purchased from the utility $\bar{U}_k$ and the heat output from the boiler $\bar{Q}_k$ are uniformly sampled over the period of the entire horizon.

Alternatively, we also employ a {\em k-means clustering algorithm\/} in order to cluster profiles. Given a prespecified $k$ number of clusters, this algorithm assigns profiles to one of $k$ clusters defined by the centroids~\citep{jai10}. Since demand and pricing data for buildings tend to differ significantly in winters and summers, we set $k=2$, and choose one profile that has the minimum least-squares distance to the centroids. In our numerical experiments in Section~\ref{sec.results}, we apply a $k$-means algorithm to the boiler output $\bar{Q}_k$, power purchased from the utility $\bar{U}_k$, heat storage output $\bar{S}_k^{\rm out}$, and battery power output $\bar{B}^{\rm IO}_k$ profiles. The clustering approach achieves better performance in the objective function than does simple sampling heuristics for profile selection.

%%% Local Variables:
%%% TeX-master: "StochMIP-2.tex"
%%% End:
\section{Numerical Results and Case Studies} \label{sec.results}

In this section, we illustrate the effectiveness of our two-level approach using five different building examples. We demonstrate that both the semi-coarse and the coarse models allow us to find good approximate solutions in a fraction of the time compared with solving the full MILP model. The two-level approach also scales to large problems that are beyond the scope of state-of-the-art commercial MILP solvers. 

\subsection{Generation of Problem Instances} 

We use the simulation program {\em EnergyPlus 8.2.\/} \citep{EnPlus01,crapedlawwin00} to generate hourly electricity and heating demands for typical summer and winter days for five types of buildings located in Chicago, Illinois. We scale the daily demands, depending on building types, to generate weekly summer and winter demands. The scaling factors for building types are shown in Table~\ref{tab.building}. We generate yearly demand profiles by interpolating between winter and summer weeks. The demand of the $i$th week is given by $D_i = w_i D_{\rm wt} + (1-w_i) D_{\rm sm}$, where $D_{\rm wt}$ and $D_{\rm sm}$ are the weekly demand in winter and summer, respectively, and $w_i$ is a piecewise linear function shown in Figure~\ref{fig.linfun}. The resulting yearly demand profiles are perturbed by a zero mean unit variance Gaussian noise whose magnitude is $2\%$ proportional to the magnitude of demands. We repeat this process to generate demands for multiple years.

Figure \ref{fig.Electr} shows the electricity demand in summer and winter weeks for five different buildings: an office building, a supermarket, a full-service restaurant, a hospital, and a stand-alone retail store. In addition to different peak demand patterns, the number of kilowatt-hours varies by orders of magnitude for different buildings, from tens of kilowatt-hours for the restaurant to more than a thousand kilowatt-hours for the hospital. The diversity of peak demand patterns and the wide range of magnitudes show the richness of the example set. As a result, the difficulty of the MILP model~\eqref{eqn.model} varies significantly for different buildings; see Section~\ref{sec.fullsol}.

We follow the pricing structure of~\citet{stamarsidlaicofaki09}. The price for electricity and gas is $\$0.12$ per kWh and $\$0.049$ per kWh, respectively. The peak demand charge is $\$14.2$ per kW for summer months (from June to September) and $\$11.36$ per kW for winter months (from October to May). These prices are fixed for each year over the second-stage time horizons.

\begin{table}
  \centering
  \begin{tabular}{|c|ccccccc|}
    \hline
    Building Type      & Mon. & Tue. & Wed. & Thu. & Fri. & Sat. & Sun. \\
    \hline
    Office             & 1    & 1    & 1    & 1    & 1    & 1/2  & 1/4  \\
    Supermarket        & 1/2  & 1/2  & 1/2  & 1/2  & 1/2  & 1    & 1    \\
    Restaurant         & 1/4  & 1/4  & 1/2  & 1/2  & 1    & 1    & 1/2  \\
    Hospital           & 1    & 1    & 1    & 1    & 1    & 1    & 1    \\
    Retail             & 1/4  & 1/2  & 1/2  & 1    & 1    & 1/4  & 1/4  \\
    \hline
  \end{tabular}
  \caption{Scaling factors of daily demands in a week for five buildings. For example, the demand of the office building on Saturdays is scaled by half of the demand on weekdays.}
  \label{tab.building}
\end{table}

\begin{figure}
  \centering
  \begin{tikzpicture}
    \begin{axis}[width=0.75\textwidth, height=0.15\textheight, xmin=0, xmax=52, ytick={0,1}, xtick={0,13,24,37,44,52}, grid=major, ylabel=weight $w_i$, xlabel=weeks]
    \addplot[line width = 1pt] table [x = weeks, y = weights] {linfun.tab};
    \end{axis}
  \end{tikzpicture}    
  \caption{Piecewise linear interpolation between winter and summer weeks. The demand of the $i$th week is given by $D_i = w_i D_{\rm wt} + (1-w_i) D_{\rm sm}$, where $D_{\rm wt}$ is the weekly demand in winter and $D_{\rm sm}$ is the weekly demand in summer.}
  \label{fig.linfun}
\end{figure}
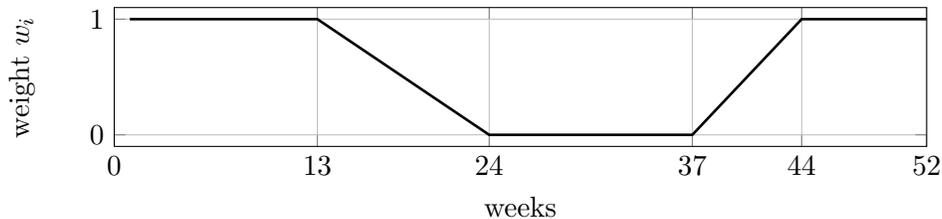

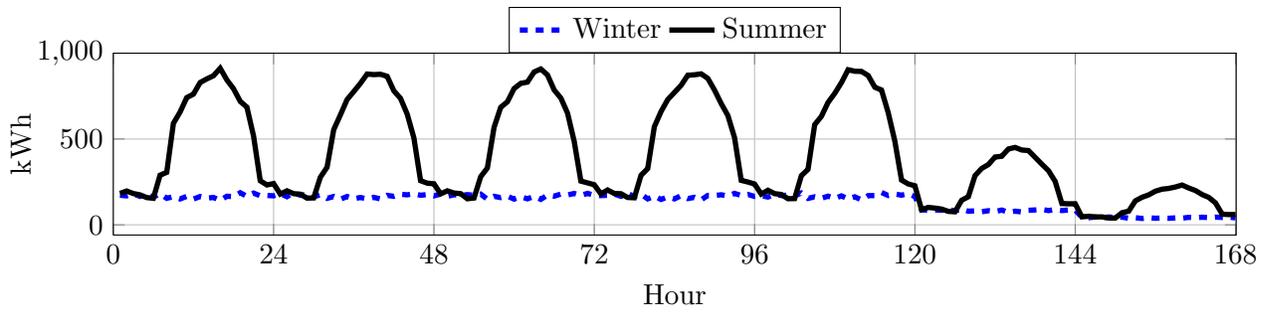
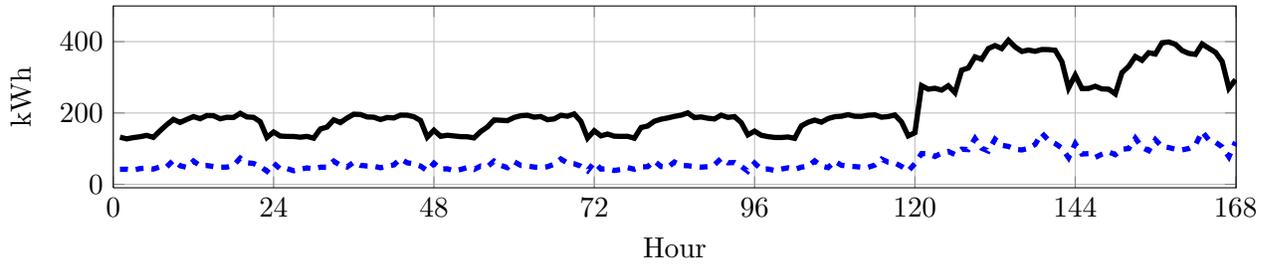
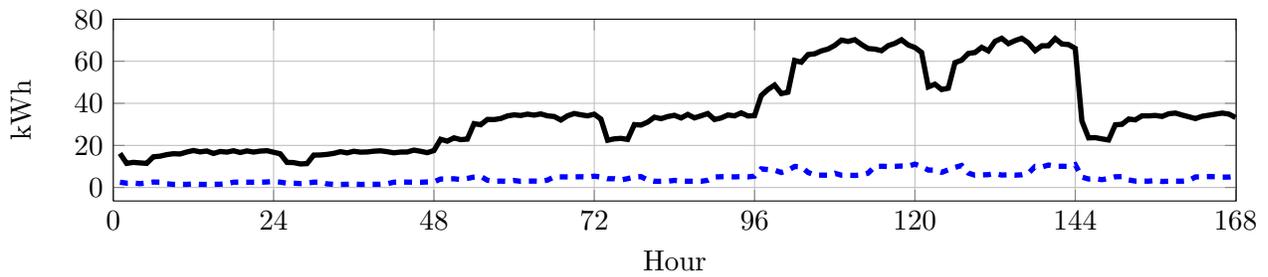
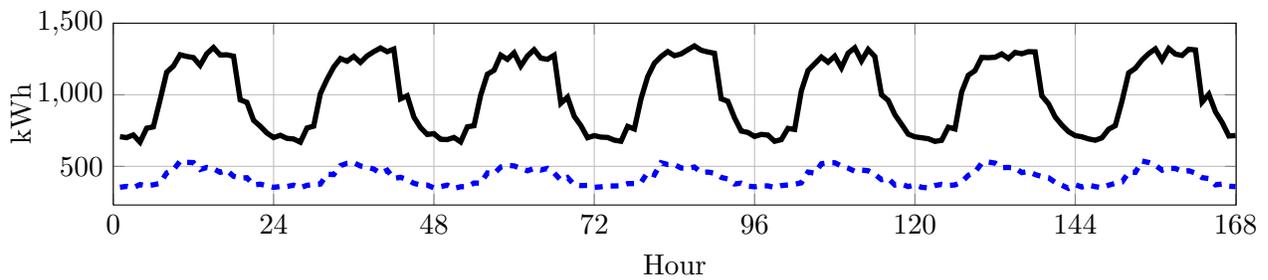
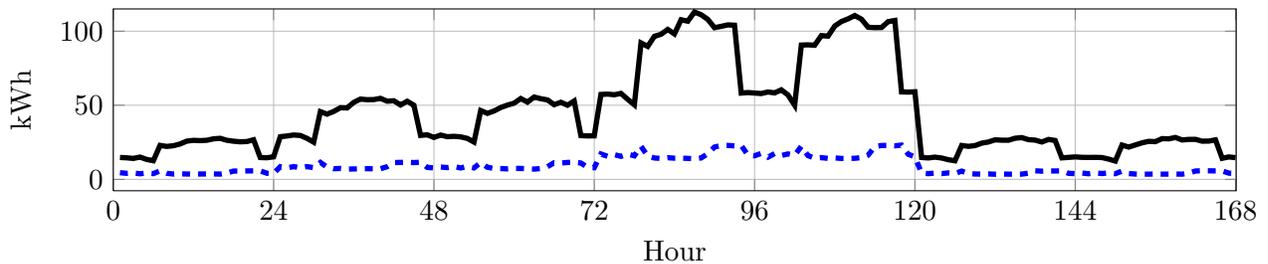
\begin{figure}
\centering
\begin{subfigure}{\textwidth}
  \begin{tikzpicture}
    \begin{axis}[width=\textwidth, height=0.175\textheight, xmin=0, xmax=168.1, ymax=1000, grid=major, xlabel=Hour, ylabel=kWh, xtick={0,24,48,72,96,120,144,168},legend style={at={(0.5,1.0)}, anchor=south,legend columns=-1}]
    \addplot[color=blue, dashed, line width = 2pt] table [x = Time, y = ElectrWinter] {SchoolWeek.tab};
    \addplot[color=black, line width = 2pt] table [x = Time, y = ElectrSummer] {SchoolWeek.tab};    
    \legend{Winter, Summer}
    \end{axis}
  \end{tikzpicture}
  \caption{Electricity demand for an office building in winter and summer weeks.}
  \label{fig.schoolElectr}
\end{subfigure}
\\[0.2cm]
\begin{subfigure}{\textwidth}
  \begin{tikzpicture}
    \begin{axis}[width=\textwidth, height=0.175\textheight, xmin=0, xmax=168.1, ymax=500, grid=major, xlabel=Hour,  ylabel=kWh, xtick={0,24,48,72,96,120,144,168},legend style={at={(0.5,1.0)}, anchor=south,legend columns=-1}]
    \addplot[color=blue, dashed, line width = 2pt] table [x = Time, y = ElectrWinter] {MarketWeek.tab};
    \addplot[color=black, line width = 2pt] table [x = Time, y = ElectrSummer] {MarketWeek.tab};    
    %\legend{Winter, Summer}
    \end{axis}
  \end{tikzpicture}
  \caption{Electricity demand for a supermarket in winter and summer weeks.}
  \label{fig.marketElectr}
\end{subfigure}
\\[0.2cm]
\begin{subfigure}{\textwidth}
  \begin{tikzpicture}
    \begin{axis}[width=\textwidth, height=0.175\textheight, xmin=0, xmax=168.1, ymax=80, grid=major, xlabel=Hour, ylabel=kWh, xtick={0,24,48,72,96,120,144,168},legend style={at={(0.5,1.0)}, anchor=south,legend columns=-1}]
    \addplot[color=blue, dashed, line width = 2pt] table [x = Time, y = ElectrWinter] {RestaurantWeek.tab};
    \addplot[color=black, line width = 2pt] table [x = Time, y = ElectrSummer] {RestaurantWeek.tab};    
    %\legend{Winter, Summer}
    \end{axis}
  \end{tikzpicture}
  \caption{Electricity demand for a full service restaurant in winter and summer weeks.}
  \label{fig.restaurantElectr}
\end{subfigure}
\\[0.2cm]
\begin{subfigure}{\textwidth}
   \begin{tikzpicture}
    \begin{axis}[width=\textwidth, height=0.175\textheight, xmin=0, xmax=168.1, ymax=1500, grid=major, xlabel=Hour, ylabel=kWh, xtick={0,24,48,72,96,120,144,168},legend style={at={(0.5,1.0)}, anchor=south,legend columns=-1}]
    \addplot[color=blue, dashed, line width = 2pt] table [x = Time, y = ElectrWinter] {HospitalWeek.tab};
    \addplot[color=black, line width = 2pt] table [x = Time, y = ElectrSummer] {HospitalWeek.tab};    
    %\legend{Winter, Summer}
    \end{axis}
  \end{tikzpicture}
  \caption{Electricity demand for a hospital in winter and summer weeks}
  \label{fig.hospitalElectr}
\end{subfigure}
\\[0.2cm]
\begin{subfigure}{\textwidth}
  \begin{tikzpicture}
    \begin{axis}[width=\textwidth, height=0.175\textheight, xmin=0, xmax=168.1, ymax=115, grid=major, xlabel=Hour, ylabel=kWh, xtick={0,24,48,72,96,120,144,168},legend style={at={(0.5,1.0)}, anchor=south,legend columns=-1}]
    \addplot[color=blue, dashed, line width = 2pt] table [x = Time, y = ElectrWinter] {RetailWeek.tab};
    \addplot[color=black, line width = 2pt] table [x = Time, y = ElectrSummer] {RetailWeek.tab};    
    %\legend{Winter, Summer}
    \end{axis}
  \end{tikzpicture}
  \caption{Electricity demand for a stand-alone retail in winter and summer weeks.}
  \label{fig.retailElectr}
\end{subfigure}
\caption{Electricity demand for five buildings in winter and summer weeks.}
\label{fig.Electr}
\end{figure}

\subsection{Numerical Experiment Setup} 

Numerical experiments are performed on a workstation with 32 GB memory and two Intel E5430 Xeon 4-core 2.66 GHz CPUs. We implement our algorithms in AMPL~\citep{fougayker12} to take advantage of AMPL's compact modeling syntax in profile generation, storage, and management. We use CPLEX version 12.6.1.0 as the MIP solver in AMPL. We set a 3-hour time limit and $1\%$ relative gap as the stopping criteria for CPLEX.

\subsection{Solutions of the Full Model with Short Second-Stage Horizons}
\label{sec.fullsol}

Table~\ref{tab.problemsize} shows the problem size of the full MILP model~\eqref{eqn.model} as a function of the number of days in the second stage. The number of binary variables, continuous variables, and constraints grows linearly with the number of days in the second-stage problem. For the one-year model, there are $1.05 \times 10^5$ binary variables, $2.71 \times 10^5$ continuous variables, and $6.11 \times 10^5$ constraints.

\begin{table}
  \centering
    \pgfplotstabletypeset[every head row/.style={before row=\hline,after row=\hline\hline}, every last row/.style={after row=\hline}, every first column/.style={ column type/.add={|}{} }, every last column/.style={ column type/.add={}{|} }] {FullModelExSize.tab}    
    \caption{Problem size of the MILP model~\eqref{eqn.model}. The number of binary variables, continuous variables, and constraints grows linearly with the number of days in the second stage.}
  \label{tab.problemsize}
\end{table}

\begin{table}
  \centering
  % \pgfplotstabletypeset[every head row/.style={before row=\hline,after row=\hline\hline}, every last row/.style={after row=\hline}, every first column/.style={ column type/.add={|}{} }, every last column/.style={ column type/.add={}{|} },every even row/.style={ before row={\rowcolor[gray]{0.9}}}] {FullModelExSol.tab}    
  \begin{tabular}{|ccccccccc|}
    \hline
   Days   &   Time(s)  &  Nodes &  LP-iter  &    Bat &  Boil &  Chp &  Pow &  Stor \\
   \hline
    4     &   2      &  0   &    4.00e+03 &   1  & 1 &  2 &  0 &  6    \\
    7     &   241    &  2807&    4.49e+05 &   2  & 1 &  0 &  1 &  0     \\
   14     &   1.21e+03 & 10352&    2.73e+06 &   2  & 1 &  0 &  1 &  0     \\
   28     &   5.69e+03 & 47770&    6.02e+06 &   2  & 1 &  0 &  1 &  0     \\
84$^\dagger$ &  1.06e+04 &   487&    1.31e+06 &   6  & 1 &  0 &  1 &  0    \\
   \hline
  \end{tabular}
  \caption{Solutions of the MILP model~\eqref{eqn.model} for the restaurant example using CPLEX. The time in seconds, the number of nodes, and the number of simplex iterations grow exponentially with the horizon length in the second stage. The first-stage solutions for batteries (Bat), boilers (Boil), CHP-SOFC (CHP), Power SOFC (Pow), and water tank storage (Stor) vary with the problem size. $^\dagger$The 84-day model reaches the 3-hour limit, but the relative MIP gap is greater than $18\%$.}
  \label{tab.problemsol}
\end{table}

Table~\ref{tab.problemsol} shows the solutions of small problems for the restaurant example using CPLEX. We see the exponential increase of the amount of time, the number of nodes in the branch-and-bound trees, and the number of simplex iterations with the number of days. Solving the 84-day example is beyond the capabilities of CPLEX on the designated workstation. After reaching the 3-hour time limit, the relative MIP gap for the 84-day model is still greater than $18\%$.\footnote{Removing the time limit does not help because then the branch-and-bound tree generated by CPLEX will consume all allowable disk space of 100 GB on the workstation.} Our experience indicates that it is unlikely that we can solve our MILP over a ten-year time horizon.

Similar observations can be made for other building examples. In Figure~\ref{fig.fullmodelsol}, we see that for all five buildings, both the number of simplex iterations and the amount of solution time increase exponentially with the number of days in the second stage. 

\begin{figure}
  \centering
  \begin{tikzpicture}
    \begin{loglogaxis}[width=0.45\textwidth, xmin=0, xmax=84, grid=major, xlabel=Days, ylabel=Time (sec), xtick={4,7,14,28,84,364},log basis x=2,xticklabel=\pgfmathparse{2^\tick}\pgfmathprintnumber{\pgfmathresult},
legend columns=-1,
legend entries={\tc{black}{Office}, \tc{black}{SuperMarket}, \tc{black}{Restaurant}, \tc{black}{Retail}, \tc{black}{Hospital}},
legend to name=named]
    \addplot table [x = Days, y = Time] {FullModelSchool.tab};
    \addplot table [x = Days, y = Time] {FullModelSuperMarket.tab};
    \addplot table [x = Days, y = Time] {FullModelRestaurant.tab};
    \addplot table [x = Days, y = Time] {FullModelRetail.tab};
    \addplot table [x = Days, y = Time] {FullModelHospital.tab};
    \end{loglogaxis}
  \end{tikzpicture}  
  ~~
  \begin{tikzpicture}
    \begin{loglogaxis}[width=0.45\textwidth, xmin=0, xmax=84, grid=major, xlabel=Days, ylabel=LP Iterations, xtick={4,7,14,28,84,364},log basis x=2,xticklabel=\pgfmathparse{2^\tick}\pgfmathprintnumber{\pgfmathresult}]
    \addplot table [x = Days, y = LP-iter] {FullModelSchool.tab};
    \addplot table [x = Days, y = LP-iter] {FullModelSuperMarket.tab};
    \addplot table [x = Days, y = LP-iter] {FullModelRestaurant.tab};
    \addplot table [x = Days, y = LP-iter] {FullModelRetail.tab};
    \addplot table [x = Days, y = LP-iter] {FullModelHospital.tab};
    \end{loglogaxis}
  \end{tikzpicture} 
  \\
  \ref{named}
  \caption{Exponential increase of the amount of time and the number of simplex iterations with the number of days in the second stage of the full MILP model~\eqref{eqn.model}. The number of simplex iterations drops for the 84-day {\em restaurant\/} example because CPLEX reaches 3-hour time limit.}
  \label{fig.fullmodelsol}
\end{figure}
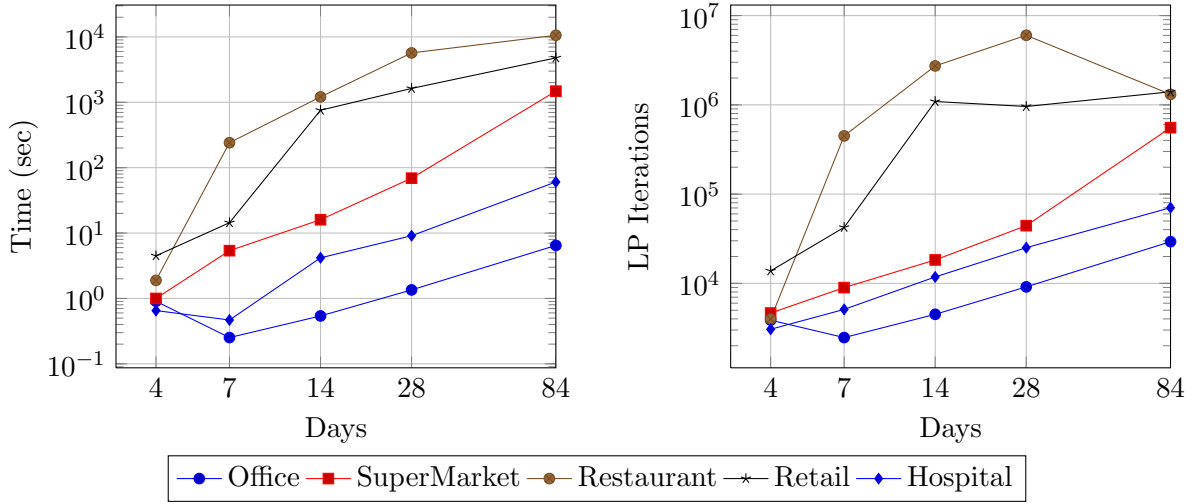

Additional numerical results are summarized in Table~\ref{tab.fullmodel} in Appendix \ref{sec.tables}. We point out that different buildings show different levels of difficulty for the MILP model~\eqref{eqn.model}. As we see in Figure~\ref{fig.fullmodelsol}, the five buildings differ by orders of magnitude in solution time and the number of simplex iterations. Similarly, the number of nodes in the branch-and-bound trees varies significantly over different buildings. For example, the 84-day model requires $10,384$ nodes for the retail example as opposed to $200$ nodes for the supermarket example; see Table~\ref{tab.fullmodel} in Appendix~\ref{sec.tables}.

\subsection{Solutions of  Semi-Coarse Model with Long Second-Stage Horizons}

In Figure~\ref{fig.probsizefullsemi}, we compare the problem size for the full model~\eqref{eqn.model}, the semi-coarse model~\eqref{eqn.semicoarse}, and the coarse model~\eqref{eqn.coarse} over a 10-year second-stage horizon. We see a roughly $8$ times reduction in the number of binary variables and $12$ times reduction in the number of continuous variables. The reason is that for each day with $24$ hours, we pick the three most frequent on/off profiles for binary variables and two cluster centers from the $k$-means clustering algorithm for continuous variables. While increasing the number of profiles improves the quality of coarsened models~\eqref{eqn.semicoarse} and~\eqref{eqn.coarse}, the resulting computational effort increases significantly. In practice, we find that the two-level approach strikes a good balance between solution quality and computational time with a small number of profiles (typically 2-3). Note that the large reduction in the number of constraints from the full model to the semi-coarse model. This is mainly because we have embedded the min/max production constraints~\eqref{eqn.powergen} and the switching constraints~\eqref{eqn.switching} in the profile generation; see Section~\ref{sec.semicoarse}. In particular, for a 10-year model with $12$ fuel cell units, there is a reduction of $12\times 87600 \times 4 \approx 4.2 \times 10^6$ constraints (that is, a $67\%$ of reduction in the number of constraints.)  

Table~\ref{tab.probsolsemi} shows the solution information for the semi-coarse model over a 10-year horizon. As opposed to the exponential increase for the full model, the solution time, the number of nodes, and the number of simplex iterations for the semi-coarse model grow more slowly. Furthermore, the first-stage solutions for the semi-coarse model are no longer sensitive to the horizon length. This result is in contrast to the variability of first-stage solutions for the full model in Table~\ref{tab.problemsol}.

Figure~\ref{fig.semicoarsesol} shows that both the number of simplex iterations and the amount of solution time scale more slowly for the semi-coarse models. For example, the number of simplex iterations for 10-year semi-coarse models is on the same order as that of the 84-day full model. The numerical results for the semi-coarse model are summarized in Table~\ref{tab.semicoarsemodel}  in Appendix~\ref{sec.tables}.

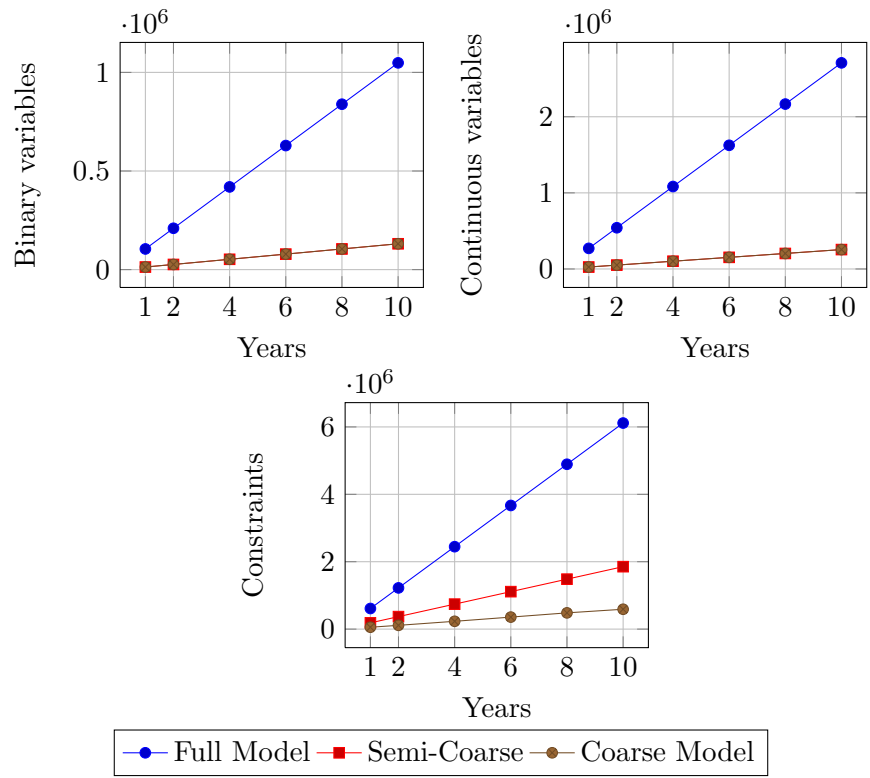
\begin{figure}
  \centering
    \begin{tikzpicture}
    \begin{axis}[width=0.34\textwidth, grid=major, xtick={1,2,4,6,8,10}, xlabel={Years}, ylabel=Binary variables,
legend columns=-1,
legend entries={\tc{black}{Full Model}, \tc{black}{Semi-Coarse}, \tc{black}{Coarse Model}},
legend to name=legend]
    \addplot table [x = Years, y = Binary] {FullModelExSizeLong.tab};
    \addplot table [x = Years, y = Binary] {SemiCoarseExSize.tab};
    \addplot table [x = Years, y = Binary] {CoarseModelExSize.tab};
    \end{axis}
    \end{tikzpicture}          
    \;
    \begin{tikzpicture}
    \begin{axis}[width=0.34\textwidth, grid=major, xtick={1,2,4,6,8,10}, xlabel={Years}, ylabel=Continuous variables]
    \addplot table [x = Years, y = Continuous] {FullModelExSizeLong.tab};
    \addplot table [x = Years, y = Continuous] {SemiCoarseExSize.tab};
    \addplot table [x = Years, y = Continuous] {CoarseModelExSize.tab};
    \end{axis}
    \end{tikzpicture}          
    \;
    \begin{tikzpicture}
    \begin{axis}[width=0.34\textwidth, grid=major, xtick={1,2,4,6,8,10}, xlabel={Years}, ylabel=Constraints]
    \addplot table [x = Years, y = Constraint] {FullModelExSizeLong.tab};
    \addplot table [x = Years, y = Constraint] {SemiCoarseExSize.tab};
    \addplot table [x = Years, y = Constraint] {CoarseModelExSize.tab};
    \end{axis}
    \end{tikzpicture}          
    \\
    \ref{legend}
  \caption{Problem size for the full MILP model~\eqref{eqn.model}, the semi-coarse model~\eqref{eqn.semicoarse}, and the coarse model~\eqref{eqn.coarse}. The number of binary and continuous variables are the same for the semi-coarse and coarse models.}
  \label{fig.probsizefullsemi}
\end{figure}

\begin{table}
  \centering
  \pgfplotstabletypeset[every head row/.style={before row=\hline,after row=\hline\hline}, every last row/.style={after row=\hline}, every first column/.style={ column type/.add={|}{} }, every last column/.style={ column type/.add={}{|} },every even row/.style={ before row={\rowcolor[gray]{0.9}}}] {SemiCoarseExSol.tab}    
  \caption{Solutions of the semi-coarse model~\eqref{eqn.semicoarse} for the restaurant example using CPLEX. The time in seconds, the number of nodes, and the number of simplex iterations grow much more slowly than for the full model in Table~\ref{tab.problemsol}. The first-stage solutions for batteries (Bat), boilers (Boil), CHP-SOFC (CHP), power SOFC (Pow), and water tank storage (Stor) do not change with the problem size.}
  \label{tab.probsolsemi}
\end{table}

\begin{figure}
  \centering
  \begin{tikzpicture}
    \begin{loglogaxis}[width=0.45\textwidth, xmin=0, xmax=10, grid=major, xlabel=Years, ylabel=Time (sec), 
 xtick={1,2,4,6,8,10},
log basis x=2,xticklabel=\pgfmathparse{2^\tick}\pgfmathprintnumber{\pgfmathresult},
legend columns=-1,
legend entries={\tc{black}{Office}, \tc{black}{SuperMarket}, \tc{black}{Restaurant}, \tc{black}{Retail}, \tc{black}{Hospital}},
legend to name=named]
    \addplot table [x = Years, y = Time] {SemiCoarseKmeansSchool.tab};
    \addplot table [x = Years, y = Time] {SemiCoarseKmeansSuperMarket.tab};
    \addplot table [x = Years, y = Time] {SemiCoarseKmeansRestaurant.tab};
    \addplot table [x = Years, y = Time] {SemiCoarseKmeansRetail.tab};
    \addplot table [x = Years, y = Time] {SemiCoarseKmeansHospital.tab};
    \end{loglogaxis}
  \end{tikzpicture}  
  ~~
  \begin{tikzpicture}
    \begin{loglogaxis}[width=0.45\textwidth, xmin=0, xmax=10, ymin=10000, ymax=1000000, grid=major, xlabel=Years, ylabel=LP Iterations,  xtick={1,2,4,6,8,10}, log basis x=2,xticklabel=\pgfmathparse{2^\tick}\pgfmathprintnumber{\pgfmathresult}]
    \addplot table [x = Years, y = LP-iter] {SemiCoarseKmeansSchool.tab};
    \addplot table [x = Years, y = LP-iter] {SemiCoarseKmeansSuperMarket.tab};
    \addplot table [x = Years, y = LP-iter] {SemiCoarseKmeansRestaurant.tab};
    \addplot table [x = Years, y = LP-iter] {SemiCoarseKmeansRetail.tab};
    \addplot table [x = Years, y = LP-iter] {SemiCoarseKmeansHospital.tab};
    \end{loglogaxis}
  \end{tikzpicture} 
  \\
  \ref{named}
  \caption{Solution time and number of simplex iterations for the semi-coarse model over a 10-year horizon. The growth of computational effort is considerably slower than that of the full model shown in Figure~\ref{fig.fullmodelsol}.}
  \label{fig.semicoarsesol}
\end{figure}
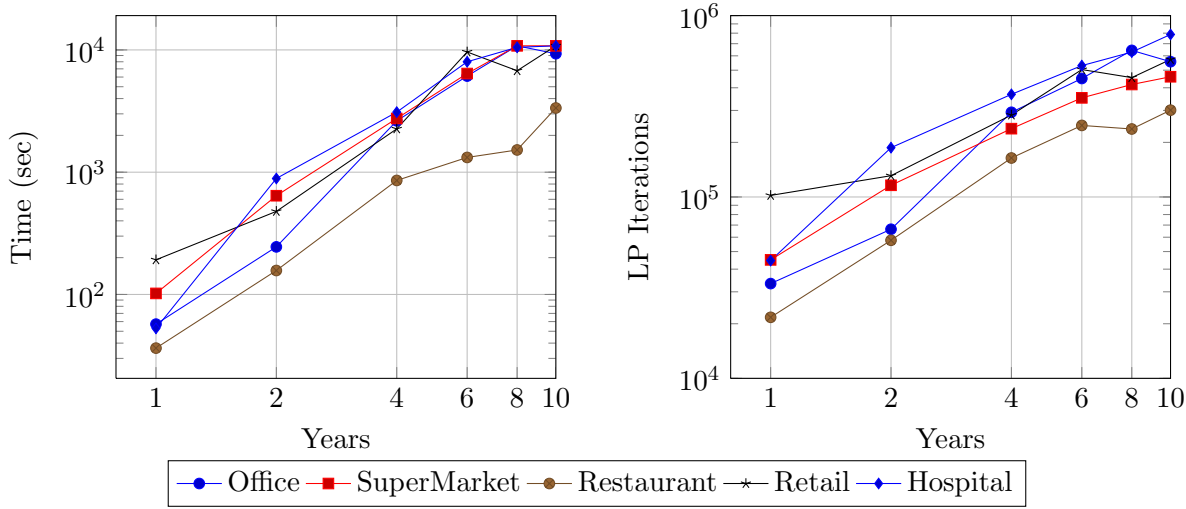

\subsection{Solutions of the Coarse Model with Long Second-Stage Horizons}
\label{sec.coarsesol}

As shown in Figure~\ref{fig.probsizefullsemi}, the semi-coarse and coarse models have the same number of binary and continuous variables, and the semi-coarse model has about twice as many constraints than does the coarse model. Therefore, one expects that both models can be solved with a similar amount of computational effort in terms of time and simplex iterations. Since we solve a sequence of coarse models, the total computational effort often exceeds that of solving the semi-coarse model. Because of the LP warm-start phase in Algorithm~\ref{alg.coarseMILP}, however, we find that the first iterate of the coarse MILPs provides remarkably good solutions of the semi-coarse model.  

To illustrate this point, we show in Table~\ref{tab.Restaurant10YearEx} the solution history of the MILP phase in Algorithm~\ref{alg.coarseMILP}. The MILP phase converges in three coarse MILP iterates, and the first iterate requires much more computational effort than do other iterates; in particular, it accounts for $76\%$ of total computational time. Compared with the solution from the semi-coarse model, the first iterate of the coarse MILPs provides a very good solution. The relative gap of the objective value 
\begin{equation}
  \label{eq.gap}
 \mu = ({\rm ObjVal}_{\rm semi} - {\rm ObjVal}_{\rm coar})/{\rm ObjVal}_{\rm semi}
\end{equation}
between the semi-coarse model and the coarse model at the first iterate is less than $4.7 \times 10^{-4}$. 

Figure~\ref{fig.OptGapTime} shows that the relative gap $\mu$ for all five building examples over the 10-year horizon is no greater than $0.05$. This implies that the first iterate of coarse MILPs is a good approximation of the semi-coarse model. Numerical results for the first iterate of coarse MILPs are summarized in Table~\ref{tab.coarsemodel} in Appendix~\ref{sec.tables}.

\begin{table}
  \centering
  \begin{tabular}{|ccccc|}
    \hline
    \input{Restaurant10YearSize.tab}
    \hline
  \end{tabular}
 \\[0.5cm]
 \pgfplotstabletypeset[every head row/.style={before row=\hline,after row=\hline\hline}, every last row/.style={after row=\hline}, every first column/.style={ column type/.add={|}{} }, every last column/.style={ column type/.add={}{|} },every even row/.style={ before row={\rowcolor[gray]{0.9}}}] {Restaurant10YearSol.tab}
  \caption{Problem size of MILP iterates for a 10-year model of the restaurant example. The first iterate accounts for $76\%$ of computational time, and the relative gap between the objective value defined in~\eqref{eq.gap} is less than $4.7 \times 10^{-4}$.}
  \label{tab.Restaurant10YearEx}
\end{table}

\begin{figure}
  \centering
  \begin{tikzpicture}
    \begin{semilogyaxis}[width=0.45\textwidth, xmin=0, xmax=10, grid=major, xlabel=Years, ylabel=$\mu$, 
 xtick={1,2,4,6,8,10}]
    \addplot table [x = Year, y expr=abs(\thisrow{SemiObj}-\thisrow{ObjVal})/\thisrow{SemiObj}] {CoarseModel1stIterSchool.tab};
    \addplot table [x = Year, y expr=abs(\thisrow{SemiObj}-\thisrow{ObjVal})/\thisrow{SemiObj}] {CoarseModel1stIterSuperMarket.tab};
    \addplot table [x = Year, y expr=abs(\thisrow{SemiObj}-\thisrow{ObjVal})/\thisrow{SemiObj}] {CoarseModel1stIterRestaurant.tab};
    \addplot table [x = Year, y expr=abs(\thisrow{SemiObj}-\thisrow{ObjVal})/\thisrow{SemiObj}] {CoarseModel1stIterRetail.tab};
    \addplot table [x = Year, y expr=abs(\thisrow{SemiObj}-\thisrow{ObjVal})/\thisrow{SemiObj}] {CoarseModel1stIterHospital.tab};
    \end{semilogyaxis}
  \end{tikzpicture}  
%   ~~
%   \begin{tikzpicture}
%     \begin{semilogyaxis}[width=0.45\textwidth, xmin=0, xmax=10, ymax=10, ymin=0.1, grid=major, xlabel=Years, ylabel=$\tau$, 
%  xtick={1,2,4,6,8,10},
% legend columns=-1,
% legend entries={\tc{black}{School}, \tc{black}{SuperMarket}, \tc{black}{Restaurant}, \tc{black}{Retail}, \tc{black}{Hospital}},
% legend to name=named]
%     \addplot table [x = Year, y expr=\thisrow{Time}/\thisrow{SemiTime}] {CoarseModel1stIterSchool.tab};
%     \addplot table [x = Year, y expr=\thisrow{Time}/\thisrow{SemiTime}] {CoarseModel1stIterSuperMarket.tab};
%     \addplot table [x = Year, y expr=\thisrow{Time}/\thisrow{SemiTime}] {CoarseModel1stIterRestaurant.tab};
%     \addplot table [x = Year, y expr=\thisrow{Time}/\thisrow{SemiTime}] {CoarseModel1stIterRetail.tab};
%     \addplot table [x = Year, y expr=\thisrow{Time}/\thisrow{SemiTime}] {CoarseModel1stIterHospital.tab};
%     \end{semilogyaxis}
%   \end{tikzpicture}  
  \ref{named}
  \caption{The relative gap $\mu$ defined in~\eqref{eq.gap} between the first iterate of coarse MILPs and the semi-coarse model is no greater than 0.05 for all five buildings over a 10-year horizon.}
  \label{fig.OptGapTime}
\end{figure}
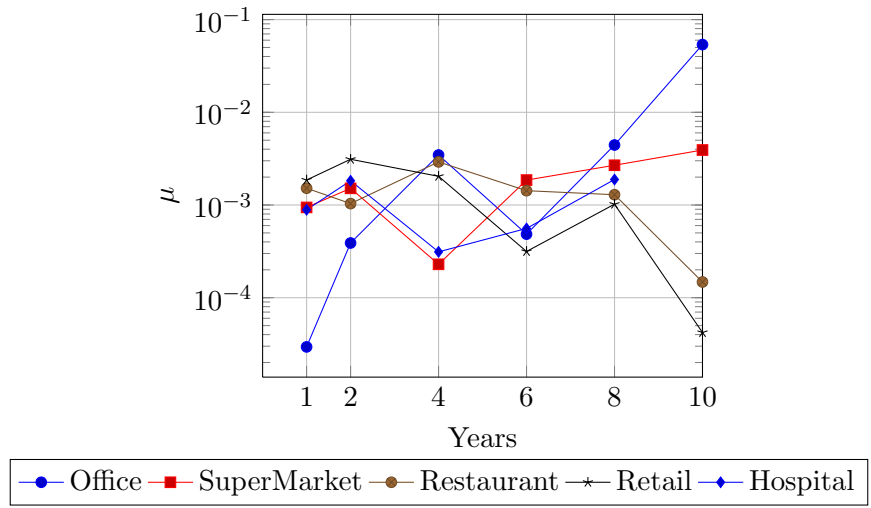

\subsection{Effect of the LP Warm-Start}

To illustrate the effect of the LP warm-start, we consider the cogeneration problem for the restaurant example over a one-year second-stage horizon. The original MILP~\eqref{eqn.model} has $1.05 \times 10^5$ binary variables, $2.71 \times 10^5$ continuous variables, and $6.11 \times 10^5$ constraints. Figure~\ref{fig.RestaurantWarmStart} shows the number of constraints and the computation time during the LP warm-start phase. A large number of constraints are added in the first few (3-5) iterates, resulting in more computation time at the beginning of the algorithm. As the LP warm-start phase progresses, fewer constraints are violated, and each LP-relaxation becomes easier to solve because of the warm-start. The amount of time in the LP warm-start phase is a fraction of that with one MILP re-solve in the MILP phase.
 
We next compare the MILP phase of Algorithm~\ref{alg.coarseMILP} with and without the LP warm-start. Figure~\ref{fig.RestaurantWarmStartMILP} shows the number of MILP re-solves and the computation time for both cases. Without the LP warm-start, Algorithm~\ref{alg.coarseMILP} takes 6 MILP re-solves, as opposed to 2 MILP re-solves with the LP warm-start. Moreover, the MILP re-solves with the LP warm-start is up to 20 times faster than the MILPs without the LP warm-start. We note that the improvement of the LP warm-start phase is observed in all building examples.

\begin{figure}
  \centering
  \begin{tabular}{cc}
    \begin{tikzpicture}
    \begin{axis}[width=0.45\textwidth, grid=major, xmin=1, xmax=20, xlabel={LP warm-start iterates}, ylabel={Constraints}]
    \addplot[color=blue, mark=*,line width = 1pt] table [x = Iter, y = Constraint] {CoarseModelLPRestaurant.tab};
    \end{axis}
    \end{tikzpicture}        
    &
    \begin{tikzpicture}
    \begin{axis}[width=0.45\textwidth, grid=major, xmin=1, xmax=20, xlabel={LP warm-start iterates}, ylabel={Time (sec)}]
    \addplot[color=blue, mark=*,line width = 1pt] table [x = Iter, y = Time] {CoarseModelLPRestaurant.tab};
    \end{axis}
    \end{tikzpicture}
    \end{tabular}   
  \caption{LP warm-start phase for the one-year restaurant example. The number of constraints (left panel) and the solution time (right panel) for LP-relaxations of~\eqref{eqn.coarse}.}
  \label{fig.RestaurantWarmStart}
\end{figure}
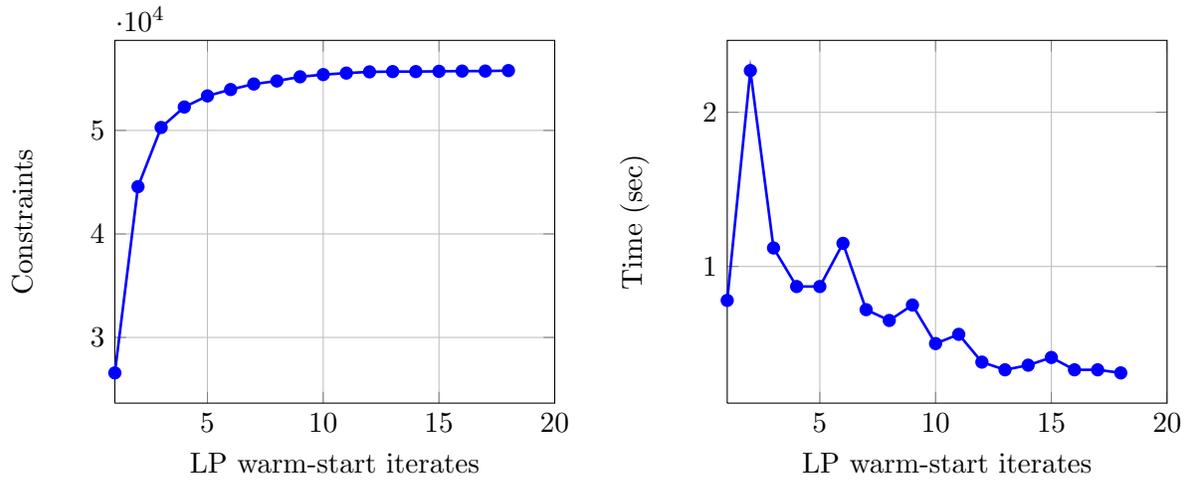
  
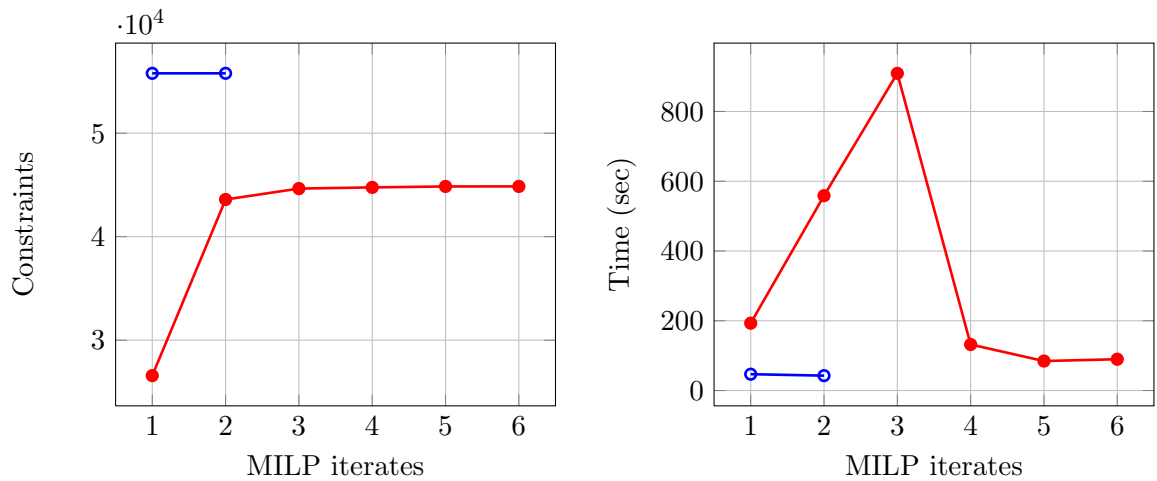
\begin{figure}
  \centering
    \begin{tabular}{cc}
    \begin{tikzpicture}
    \begin{axis}[width=0.45\textwidth, grid=major, xtick={1,2,3,4,5,6}, xlabel={MILP iterates}, ylabel={Constraints}]
    \addplot[color=red, mark=*,line width = 1pt] table [x = Iter, y = Constraint] {CoarseModelRestaurant_NoWarmStart.tab};
    \addplot[color=blue, mark=o,line width = 1pt] table [x = Iter, y = Constraint] {CoarseModelRestaurant.tab};
    \end{axis}
    \end{tikzpicture}        
    \quad &
    \begin{tikzpicture}
    \begin{axis}[width=0.45\textwidth, grid=major, xtick={1,2,3,4,5,6}, xlabel={MILP iterates}, ylabel={Time (sec)}]
    \addplot[color=red, mark=*,line width = 1pt] table [x = Iter, y = Time] {CoarseModelRestaurant_NoWarmStart.tab};
    \addplot[color=blue, mark=o,line width = 1pt] table [x = Iter, y = Time] {CoarseModelRestaurant.tab};
    \end{axis}
    \end{tikzpicture}
    \end{tabular}  
  \caption{Effect of LP warm-start for the one-year restaurant example. Algorithm~\ref{alg.coarseMILP} with LP warm-start phase (\tc{blue}{$\circ$}) converges with fewer MILP iterations than without LP warm-start (\tc{red}{$\bullet$}). While more constraints are added to MILPs with LP warm-start (left panel), each MILP iterate is 5 to 20 times faster than MILPs without LP warm-start (right panel).}
  \label{fig.RestaurantWarmStartMILP}
\end{figure}

\subsection{First-Stage Solutions}

Since the first-stage solutions are of primary importance, we next compare them from the full MILP, the semi-coarse, and the coarse models.  In Table~\ref{tab.fullmodel}, we see that the investment decisions at the first-stage vary significantly with respect to the horizon length of the second-stage problem. For example, the number of water tanks for storage drops from six units in the 4-day model to zero units in the 7-day model, and the number of batteries increases from two units in the 28-day model to six units in the 84-day model. These numbers imply that the optimal first-stage solutions from a short-horizon problem are suboptimal solutions for a long-horizon problem. Therefore, we must solve long-horizon MILPs~\eqref{eqn.model} as done for the semi-coarse and coarse models. 

In Table~\ref{tab.semicoarsemodel}, the first-stage solutions are much less sensitive to the horizon length at the second stage. For example, the first-stage solutions do not change over the 10-year horizon for the supermarket and restaurant examples. For the other three buildings, the $\ell_1$-norm between first-stage solutions of the same building example is no greater than 2 (i.e., at most two-unit difference of investment decisions). Similarly, the first-stage solutions at the first iterate of the coarse MILPs are insensitive to the horizon length at the second stage. The $\ell_1$-distance between the first-stage solutions of the same building example is no greater than 3. 

%Furthermore, at the first iterate, the first-stage solutions of the coarse MILP are the same as solutions from the semi-coarse model. 

%%% Local Variables:
%%% TeX-master: "StochMIP-2.tex"
%%% End:
\section{Conclusions} \label{sec.concl}

We study two-stage MILPs with more than 1 million binary variables in the second-stage problem. We develop a two-level approach by constructing semi-coarse models (coarsened with respect to variables) and coarse models (coarsened in both variables and constraints). We show that the semi-coarse model is guaranteed to provide a feasible solution of the original MILP and hence results in an upper bound on the optimal solution. We solve a sequence of coarse MILPs with aggregated constraints that converges to the same upper bound with a finite number of steps. Furthermore, we take advantage of the LP warm-start to reduce the number of MILP re-solves.

We apply our approach to the cogeneration problem for commercial buildings. We demonstrate the effectiveness of the two-level approach using building examples with simulation data. In particular, the two-level approach allows us to obtain good approximate solutions at a fraction of the time required for solving the original MILPs. Furthermore, we show that the two-level approach scales to large problems that are beyond the capacity of state-of-the-art commercial solvers.

A number of extensions are of interest in our future work. First, how should one add new profiles in the coarsened models? Currently, we select a number of prespecified profiles and fix them throughout the MILP re-solves. Since the solution quality of coarsened models is determined by profiles, it is of interest to add new profiles dynamically that are most promising in reducing the objective value. It seems that the reduced cost associated with profiles can be obtained by solving appropriate pricing problems like those in the column generation approach~\citep{barjohnemsavvan98}. Further, it is also of interest to consider dynamic aggregations of constraints as new profiles are included. For set-partitioning constraints, a similar approach has been developed in the column generation framework~\citep{elhvilsouguy05}. We plan to explore this line of research in future work.

Second, how can one obtain lower bounds on the optimal solution in the two-level framework? Since the coarse model is a relaxation of the semi-coarse model, which itself is a tightening of the full model, the coarse model is not a relaxation of the original MILP. While relaxing binary variables in the original MILP results in a lower bound, our numerical results indicate a sizable gap between this lower bound and the optimal solution. It is an open problem how to generate lower bounds using appropriate coarse MILPs.

Furthermore, advanced multilevel approaches for PDEs typically require several sweeps of fine/coarse levels in order to achieve accurate solutions~\citep{fal06}. It may be beneficial to iterate between solving coarse models to optimality and solving ``partially'' the fine model until a prespecified accuracy for the solution is achieved. We intend to extend our two-level framework in this direction.

%The best spook-churner I know is {\tt aspell -c} and it works with linux.

%%%%%%%%%%%%%%%%%%%%%%%%%%%%%%%%%%%%%%%%%%%%%%%%%%%%%%%%%%%%%%%%%%%%%%%%
\addcontentsline{toc}{section}{Acknowledgments}
\section*{Acknowledgments}

This material is based upon work supported by the U.S. Department of Energy, Office of Science, Office of Advanced Scientific Computing Research, Applied Mathematics program under contract number DE-AC02-06CH11357.

\appendix

\section{Full Model for the Cogeneration Problem}

For the sake of completeness, we describe here our cogeneration model.

\paragraph{Index Sets.} We use calligraphic upper-case letters for all sets.
The set of technologies: batteries, boilers, CHP-SOFC, power SOFC, and water tank storage is denoted by ${\cal J}$. We use batt, boil, pow, chp, and stor for shortcuts.
The subset ${\cal J}_g = \{ \mbox{power SOFC, CHP-SOFC} \}$ denotes SOFC generation technologies that require on/off operations at the second stage. Each technology $j \in \cJ_g$ has a number of identical units, and the index set is denoted by ${\cal U}_j$. Finally, $\cT$ is the index set of time, ${\cal M}$ is the index set of months, and ${\cal T}_m \subset \cT$ is the index set of time for each month $m \in {\cal M}$. 

\paragraph{Variables.} We use lower-case letters to denote variables. The first-stage variables in our model are the number of units of technology $j \in {\cal J}$, which we denote by $y_j \in \Z_+$.
All other variables model operation of the installed units, and they are our second-stage variables. We use the subscript $t$ to indicate the time period:
  $b_t$ and $b_t^{\rm IO}$  are the power storage and input/output for batteries, respectively;  
  $s_t$, $s_t^{\rm in}$, and $s_t^{\rm out}$  are the heat storage, input, and output for water tanks, respectively;  
  $p_{jt}$ and $q_{jt} \geq 0$ are the power and heat output of technology $j$, respectively;   
  $u_t \geq 0$             is the power purchased from the utility and
  $u_m^{\rm max} \geq 0$    is the maximum power purchased in month $m$;
  $x_{ijt} \in \{0,1\}$    indicates whether unit $i$ of technology $j$ operates in time period $t$;
  and $w_{ijt} \in [0,1]$      is an auxiliary variable to indicate whether unit $i$ of technology $j$ switches from $t$ to $t+1$. 
Note that we do not impose binary constraint on $w_{ijt}$ because it takes a binary value at the solution due to the switching constraint~\eqref{eqn.switching} and the binary constraint $x_{ijt} \in \{0,1\}$.
We use the convention $t \in \cT$, $j \in \cJ$, $i \in \cU_j$, $m \in {\cal M}$ unless explicitly mentioned.

\paragraph{Parameters.} We use upper-case letters to denote parameters and constants:
  $C_j$            is the capital and installation cost of technology $j$;  
  $H$              is the number of hours in the lifetime of technology (e.g., $H = 87600$ for a ten-year model);
  $T$              is the number of hours in the problem horizon;
  $Y$              is the hourly discount factor with $3\%$ annual interest rate (i.e., $Y = 1 - 0.03/8760$);
  $M_{j}$          is the operation and maintenance cost of technology $j$;
  $P_t$ and $G_t$     are the price for electricity and natural gas, respectively;
  $P_m^{\rm max}$  is the peak demand price for power from the utility in month $m$; 
  $W_j$            is the cost of switching a unit of technology $j$ on or off; 
  $S^c_j$          is the thermal capacity per unit for technology $j$; 
  $B^c$            is the power capacity per unit for batteries;
  $D_t^P$ and $D_t^Q$  are the power and heat demand in period $t$, respectively;
  $R^{\rm min}_j$ and $R^{\rm max}_j$  are the minimum and maximum power output when a unit of technology $j$ is turned on, respectively; 
  $E_j^P$ and  $E_j^Q$     are the power and thermal efficiency of technology $j$, respectively;   
  $L^P$ and $L^Q$  are the average power loss in battery and heat loss in water tank storage, respectively.

\paragraph{Cost Function.} We now formulate the complete MILP model for cogeneration problem in~\eqref{eqn.model}. The objective function consists of two parts: the capital and installation cost in the first-stage and the operation cost in the second stage. The operation cost includes the peak power usage, operation and maintenance, switching units, gas, and purchased power costs. We discount the second-stage costs with an {\em hourly\/} discount factor based on a $3\%$ {\em annual\/} interest rate; hence $Y=1-0.03/8760$. Therefore, the cost incurred in hour $t$ is multiplied by the discount factor $Y^t$.\footnote{For comparison, the annual discount factor is $Y^{8760} \approx 97.044\%$, which is consistent with the $3\%$ interest rate; and a ten-year discount factor is $Y^{87600} \approx 74.082\%$.} The peak demand charge at the end of month $m$ is discounted with $Y^{t_m}$. To compare cost over different time horizons $T$, we compute the average hourly cost and multiply the result with the lifetime of technologies, $H$, resulting in the factor $H/T$ in the second-stage cost in~\eqref{eqn.model}.

% \begin{figure}
%   \centering
%   \begin{minipage}  
\bsub \label{eqn.model}
\begin{alignat}{2}
     \mini \quad 
           & \ds \sum_{j \in \cJ} C_j y_j  + \frac{H}{T} \Big\{ \sum_{m \in {\cal M}} Y^{t_m} P_m^{\rm max} u_m^{\rm max}  
           &&  \nonumber  \\
           & \qquad \qquad + \ds \sum_{t \in \cT} Y^t \big[ \sum_{j \in \cJ_g} (M_{j} p_{jt}  + \ds \sum_{i \in {\cal U}_j}  W_j w_{ijt})                                        + G_t q_{t} + P_t u_t \big] \Big\} 
           && \nonumber  \\
     \st \quad & y_j \in \Z, ~ 0 \leq y_j \leq |{\cal U}_j| 
               && \label{eqn.int} \\
               & \ds \sum_{j \in \cJ_g} p_{jt} + u_t - b_t^{\rm IO} \geq  D_t^P 
               &&    \label{eqn.PowerDemand} \\
               & \dps R_j^{\rm min}  x_{ijt}  \leq  p_{ijt}  \leq  R_j^{\rm max} x_{ijt}, \quad
                  p_{jt} = \dps \sum_{i \in {\cal U}_j} p_{ijt} 
               && j \in {\cal J}_g \label{eqn.powergen} \\
               &  x_{ijt} \in \{0,1\}, \quad \ds  \sum_{i \in {\cal U}_j} x_{ijt} \leq y_j  
               &&  j \in {\cal J}_g  \label{eqn.OnOff} \\
               &  x_{(i+1)jt} \leq x_{ijt}
               && j \in \cJ_g, i < |{\cal U}_j| \label{eqn.symbrk} \\
               &  x_{ij(t+1)} - x_{ijt} \leq w_{ijt}, \, x_{ijt} - x_{ij(t+1)} \leq w_{ijt}, \, 0 \leq w_{ijt} \leq 1, \; \;
               &&  j \in {\cal J}_g,  t < |\cT| \label{eqn.switching}\\
               & 0 \leq u_t \leq u_m^{\rm max}
               &&  t \in \cT_m  \label{eqn.maxpower} \\
               & b_{t+1} = (1 - L^P) b_{t} + b_t^{\rm IO} & &   t < |\cT| \label{eqn.battery} \\
               & b_1 \,=\, b_{|\cT|}   & & \label{eqn.batteryboundary}  \\                        
               & 0 \leq b_t \leq B^c y_{\rm batt}  
               &&       \label{eqn.batterybnd} \\
               & s^{\rm out}_{t} + q_{t} \geq D_t^Q & &   \label{eqn.HeatDemand} \\
               & s_{t+1} - (1 - L^Q) s_{t} + s^{\rm out}_{t} \leq (E_{\rm chp}^Q /E_{\rm chp}^P)   p_{{\rm chp},t} 
               && t < |\cT| \label{eqn.storage} \\
               &  s_{1} = s_{|\cT|} &&  \label{eqn.heatbnd} \\
               & s^{\rm out}_{t} \leq  s_{t} \leq  S^c_{\rm stor} y_{\rm stor}  && \label{eqn.storbnd} \\
               & q_{t} \leq  S^c_{\rm boil} y_{\rm boil} & &    \label{eqn.boilbnd} \\
               & b_t, p_{jt}, q_{t}, s_t, s_t^{\rm out}, u_t \geq 0 &&   \label{eqn.positive}
\end{alignat}
\esub
%   \end{minipage}
%   \caption{The MILP model for the cogeneration in buildings.}
% \end{figure}

\paragraph{Constraints.} The constraints in the MILP model~\eqref{eqn.model} can be divided into three groups. The first group of constraints couples first- and second-stage variables; in particular, the number of on-units for fuel cells and the capacity of battery, storage, and boiler are bounded by the number of units purchased in the first-stage, as described in \eqref{eqn.OnOff}, \eqref{eqn.batterybnd}, \eqref{eqn.storbnd}, and \eqref{eqn.boilbnd}, respectively. The second group of constraints couples variables across technologies; in particular, technologies are coordinated to meet the power demand (\ref{eqn.PowerDemand}) and the heat demand (\ref{eqn.HeatDemand}). We note that dualizing these constraints results in decoupled variables for each technology; see, for example, ~\citet{fis81} and \citet{bar88}. However, MILP~\eqref{eqn.model} does not lend itself to this Lagrangian relaxation approach because of coupling constraints in the third group. This group of constraints couples variables over time periods in the second stage. In particular, the battery and storage constraints \eqref{eqn.battery} and \eqref{eqn.storage} link variables over the entire horizon. In addition, constraint (\ref{eqn.maxpower}), which models the maximum power purchased from the utility in each months, links variables over time instances in each month. Moreover, constraint~\eqref{eqn.switching}, which models on/off switches of the generation units, couples binary variables $x_{ijt}$ over two {\em immediate\/} time instances. Boundary conditions~(\ref{eqn.batteryboundary}) and (\ref{eqn.heatbnd}) for battery and storage couple variables at both ends of the entire horizon. 

We conclude this subsection by explaining the rest of the constraints in \eqref{eqn.model}. The number of purchased units is upper bounded in (\ref{eqn.int}). Since all units of the same technology are identical, we introduce a symmetry breaking constraint \eqref{eqn.symbrk} that states that if unit $i+1$ is on, then unit $i$ must be on as well. When unit $i$ of technology $j$ is turned on, the power generated $p_{ijt}$ is bounded by the minimum and maximum capacity in~\eqref{eqn.powergen}. The non-negativity constraint for variables is given by~\eqref{eqn.positive}. 

\section{Tables of Numerical Results}
\label{sec.tables}

\newpage
\newgeometry{margin=1.5cm} % modify this if you need more space
\begin{landscape}
\begin{table}
  \centering
\pgfplotstabletypeset[every head row/.style={before row=\hline,after row=\hline\hline}, every last row/.style={after row=\hline}, every first column/.style={ column type/.add={|}{} }, every last column/.style={ column type/.add={}{|} },every even row/.style={ before row={\rowcolor[gray]{0.9}}}] {FullModel.tab}
 \caption{Problem size and AMPL/CPLEX solution information for the cogeneration problem~(\ref{eqn.model}) for five buildings: 1-Office, 2-Supermarket, 3-Restaurant, 4-Retail, 5-Hospital. Model instances that reach 3-hour time limit are indicated by MSG=1.}
 \label{tab.fullmodel}
\end{table}
\end{landscape}
\restoregeometry

\newgeometry{margin=1.5cm} % modify this if you need more space
\begin{landscape}
\begin{table}
  \centering
  \pgfplotstabletypeset[every head row/.style={before row=\hline,after row=\hline\hline}, every last row/.style={after row=\hline}, every first column/.style={ column type/.add={|}{} }, every last column/.style={ column type/.add={}{|} },every even row/.style={ before row={\rowcolor[gray]{0.9}}}] {SemiCoarseModelKmeans.tab}
 \caption{Problem size and AMPL/CPLEX solution information for the semi-coarse model~(\ref{eqn.semicoarse}) for five buildings: 1-Office, 2-Supermarket, 3-Restaurant, 4-Retail, 5-Hospital. Model instances that reach 3-hour time limit are indicated by MSG=1.}
 \label{tab.semicoarsemodel}
\end{table}
\end{landscape}
\restoregeometry

\newgeometry{margin=1.5cm} % modify this if you need more space
\begin{landscape}
\begin{table}
  \centering
  \pgfplotstabletypeset[every head row/.style={before row=\hline,after row=\hline\hline}, every last row/.style={after row=\hline}, every first column/.style={ column type/.add={|}{} }, every last column/.style={ column type/.add={}{|} },every even row/.style={ before row={\rowcolor[gray]{0.9}}}] {CoarseModel1stIter.tab}
 \caption{Problem size and AMPL/CPLEX solution information for the first iteration of the coarse model~(\ref{eqn.semicoarse})  for five buildings: 1-Office, 2-Supermarket, 3-Restaurant, 4-Retail, 5-Hospital. Model instances that reach 3-hour time limit are indicated by MSG=1.}
 \label{tab.coarsemodel}
\end{table}
\end{landscape}
\restoregeometry

%%% Local Variables: 
%%% mode: latex
%%% TeX-master: "StochMIP-2.tex"
%%% End: 

\section{Semi-Coarse Model for the Cogeneration Problem}

\bsub \label{eqn.semicoarse}
\begin{alignat}{2}
     \mini \quad 
           & \ds \sum_{j \in \cJ} C_j y_j 
           + \frac{H}{T} \Big\{ \sum_{m \in {\cal M}} Y^{t_m} P_m^{\rm max} u_m^{\rm max} +
                              \ds \sum_{d} \bar{Y}_d^T \big[ \sum_{j,i,k,l}  \bar{p}_{ijdkl} M_{j} \bar{P}_{jkl}
           &&  \nonumber \\
           &  \qquad  \qquad + \ds \sum_{d,j,i,k} \bar{x}_{ijdk} W_j \bar{W}_{jk} + \sum_{d, k \in \cP_q} \bar{q}_{dk} \bar{Q}_{k} + \sum_{d,k \in \cP_u} \bar{u}_{dk} \bar{U}_{k} \big] \Big\}
           && \nonumber \\  
     \st \quad 
           & y_j \in \mathbb{Z}, ~ 0 \leq y_j \leq |\cU_j| 
           && j \in \cJ  \\
           & \ds \sum_{j,i,k,l} \bar{p}_{ijdkl} \bar{P}_{jkl} + \sum_{k \in \cP_u} \bar{u}_{dk} \bar{U}_{k} - \sum_{k \in \cP_b} \bar{b}_{dk}  \bar{B}_{k}^{\rm IO} \geq  \bar{D}_d^P 
           && \label{eqn.semi-powerdemand} \\
           & \bar{x}_{ijdk} \in \{0,1\}, ~ \sum_{k \in \cP_o} \bar{x}_{ijdk} \leq 1 
           && \label{eqn.setppcx}  \\
           & \ds \sum_{i,k} \bar{x}_{ijdk} \bar{X}_{jk} \leq y_j {\bf 1} 
           && \label{eqn.semi-cap-sofc}  \\
           & \sum_{k \in \cP_o} \bar{x}_{(i+1)jdk} \bar{X}_{jk} \leq  \sum_{k \in \cP_o} \bar{x}_{ijdk} \bar{X}_{jk}
           && i < |\cU_j| \label{eqn.semi-symbrk} \\
           & \bar{p}_{ijdk} \in [0,1], ~ \sum_{l \in \cP_p} \bar{p}_{ijdkl} = \bar{x}_{ijdk}, ~  
             \sum_{k \in \cP_o} \sum_{l \in \cP_p} \bar{p}_{ijdkl} \leq  1
           && \label{eqn.setppcv}  \\ 
           & 0 \leq \sum_{k \in \cP_u} \bar{u}_{dk} \bar{U}_{k} \leq u_m^{\rm max} {\bf 1}
           &&  d \in \cD_m   \label{eqn.semi-maxdemand} \\
           & \sum_{k \in \cP_b} \bar{b}_{(d+1)k} \bar{B}_k(1) = \sum_{k \in \cP_b} \bar{b}_{dk} \big[ (1-L^P) \bar{B}_k(\delta)  + \bar{B}_k^{\rm IO}(\delta) \big]
           && d < |\cD| \label{eqn.semi-midnights} \\
           & \sum_{k \in \cP_b} \bar{b}_{1k} \bar{B}_{k}(1) \,=\, \sum_{k \in \cP_b} \bar{b}_{|\cD|k} \bar{B}_{k}(\delta)
           && \label{eqn.semi_batt}  \\                        
           & \sum_{k \in \cP_b} \bar{b}_{dk} \bar{B}_k \leq B^c y_{\rm batt} {\bf 1} 
           && \label{eqn.semi-cap-batt} \\
           & \ds \sum_{k \in \cP_s} \bar{s}_{dk} \bar{S}^{\rm out}_{k} + \sum_{k \in \cP_q} \bar{q}_{dk} \bar{Q}_{k} \geq \bar{D}_d^Q 
           && \label{eqn.semi-HeatDemand} \\
           & \sum_{k \in \cP_s} \bar{s}_{dk} \big[ S_\delta - (1-L^Q)I_\delta \big] \bar{S}_{k} + \sum_{k \in \cP_s} \bar{s}_{(d+1)k} E_\delta \bar{S}_{k} 
           &&  d < |\cD| \nonumber \\
           & + \sum_{k \in \cP_s} \bar{s}_{dk} \bar{S}^{\rm out}_{k}  \leq (E_{j}^Q /E_{j}^P) \sum_{i,k,l} \bar{p}_{ijdkl} \bar{P}_{jkl} 
           &&       j = {\rm chp}  \label{eqn.semi-storage} \\
           & \sum_{k \in \cP_s} \bar{s}_{1k} \bar{S}_{k}(1) \,=\, \sum_{k \in \cP_s} \bar{s}_{|\cD|k} \bar{S}_{k}(\delta)
           && \label{eqn.semi-heatbnd}  \\                        
           & \ds \sum_{k \in \cP_s} \bar{s}_{dk} \bar{S}^{\rm out}_{k} \leq  S^c_{\rm stor} y_{\rm stor} {\bf 1}  
           &&  \label{eqn.semi-cap-stor} \\
           & \ds \sum_{k \in \cP_q} \bar{q}_{dk} \bar{Q}_{k}  \leq S^c_{\rm boil} y_{\rm boil} {\bf 1}   
           &&  \label{eqn.semi-cap-boil} \\       
           & \bar{u}_{dk}, ~\bar{b}_{dk}, ~\bar{s}_{dk}, ~\bar{q}_{dk} \in [0,1]
           &&  \label{eqn.boundsw} \\
           &    \sum_{k \in \cP_u} \bar{u}_{dk} \leq 1,  
                \sum_{k \in \cP_b} \bar{b}_{dk} \leq 1, 
                \sum_{k \in \cP_s} \bar{s}_{dk} \leq 1, 
                \sum_{k \in \cP_q} \bar{q}_{dk} \leq 1.
           && \label{eqn.setppcw}
\end{alignat}
\esub

%%% Local Variables: 
%%% mode: latex
%%% TeX-master: "StochMIP-2.tex"
%%% End: 

\section{Coarse Model for the Cogeneration Problem}

\bsub \label{eqn.coarse}
\begin{alignat}{2}
     \mini \quad 
           & \ds \sum_{j \in \cJ} C_j y_j 
           + \frac{H}{T} \Big\{ \sum_{m \in {\cal M}} Y^{t_m} P_m^{\rm max} u_m^{\rm max} +
                              \ds \sum_{d} \bar{Y}_d^T \big[ \sum_{j,i,k,l}  \bar{p}_{ijdkl} M_{j} \bar{P}_{jkl}
           &&  \nonumber \\
           &  \qquad  \qquad + \ds \sum_{d,j,i,k} \bar{x}_{ijdk} W_j \bar{W}_{jk}  + \sum_{d,k \in \cP_u} \bar{u}_{dk} \bar{U}_{k}  + \sum_{d, k \in \cP_q} \bar{q}_{dk} \bar{Q}_{k} \big] \Big\}
           && \nonumber \\  
     \st \quad & y_j \in \mathbb{Z}, ~ 0 \leq y_j \leq |\cU_j| 
               && j \in \cJ \\
               &  \ds \sum_{j,i,k,l} \bar{p}_{ijdkl} \hat{\bar{P}}_{jkl} + \sum_{k \in \cP_u} \bar{u}_{dk} \hat{\bar{U}}_{k}  - \sum_{k \in \cP_b} \bar{b}_{dk} \hat{\bar{B}}_{k}^{\rm IO} \geq  \hat{\bar{D}}_d^{P}  
               &&   \label{eqn.coarse-powerdemand}  \\
               & \bar{x}_{ijdk} \in \{0,1\}, ~ \sum_{k \in \cP_o} \bar{x}_{ijdk} \leq 1 
               && \label{eqn.coarse-setppcx} \\
               & \ds  \sum_{i,k} \bar{x}_{ijdk}  \hat{\bar{X}}_{jk} \leq \delta \cdot y_j 
               &&  \label{eqn.coarse-cap-sofc}  \\
               & \sum_{k \in \cP_o} \bar{x}_{ijdk} \hat{\bar{X}}_{jk} \geq \sum_{k \in \cP_o} \bar{x}_{(i+1)jdk} \hat{\bar{X}}_{jk} 
               && i< |\cU_j|  \label{eqn.coarse-symmbrk} \\
               & \bar{p}_{ijdk} \in [0,1], ~ \sum_{l \in \cP_p} \bar{p}_{ijdkl} = \bar{x}_{ijdk}, ~  
                 \sum_{k \in \cP_o} \sum_{l \in \cP_p} \bar{p}_{ijdkl} \leq 1
               && \label{eqn.coarse-setppcv} \\ 
               & 0 \leq \sum_{k \in \cP_u} \bar{u}_{dk}  \hat{\bar{U}}_{k}  \leq \delta \cdot u_m^{\rm max} 
               && d\in \cD_m \label{eqn.coarse-maxdemand} \\
               & \sum_{k \in \cP_b} \bar{b}_{(d+1)k} \bar{B}_k(1) = \sum_{k \in \cP_b} \bar{b}_{dk} ( (1-L^P) \bar{B}_k(\delta) + \bar{B}_k^{\rm IO}(\delta)) 
               && d < |\cD| \label{eqn.coarse-batt-midnight} \\
               & \sum_{k \in \cP_b} \bar{b}_{1k} \bar{B}_{k}(1) \,=\, \sum_{k \in \cP_b} \bar{b}_{|\cD|k} \bar{B}_{k}  (\delta) 
               && \label{eqn.coarse-batt-bnd} \\
               & \sum_{k \in \cP_b} \bar{b}_{dk}  \hat{\bar{B}}_k \leq \delta \cdot B^c y_{\rm batt} 
               && \label{eqn.coarse-cap-batt} \\
               &  \ds \sum_{k \in \cP_s} \bar{s}_{dk} \hat{\bar{S}}^{\rm out}_{k} + \sum_{k \in \cP_q} \bar{q}_{dk} \hat{\bar{Q}}_{k} \geq  \hat{\bar{D}}_d^Q 
               &&  \\
               & \sum_{k \in \cP_s} \bar{s}_{dk} (L^Q\hat{\bar{S}}_{k}-\bar{S}_k(1)) + \sum_{k \in \cP_s} \bar{s}_{(d+1)k} \bar{S}_{k}(1)
               &&  d < |\cD| \nonumber \\
               & + \sum_{k \in \cP_s} \bar{s}_{dk}  \hat{\bar{S}}^{\rm out}_{k}  \leq (E_{j}^Q /E_{j}^P) \sum_{i,k,l} \bar{p}_{ijdkl}  \hat{\bar{P}}_{jkl} 
               &&       j = {\rm chp}  \\
               & \sum_{k \in \cP_s} \bar{s}_{1k} \bar{S}_{k}(1) \,=\, \sum_{k \in \cP_s} \bar{s}_{|\cD|k} \bar{S}_{k} (\delta)  
               && \label{eqn.coarse-stor-bnd} \\
               & \ds \sum_{k \in \cP_s} \bar{s}_{dk}  \hat{\bar{S}}^{\rm out}_{k} \leq  \delta \cdot S^c_{\rm stor} y_{\rm stor}  
               && \label{eqn.coarse-cap-stor} \\
               & \ds \sum_{k \in \cP_q} \bar{q}_{dk} \hat{\bar{Q}}_{k}  \leq \delta \cdot S^c_{\rm boil} y_{\rm boil} 
               && \label{eqn.coarse-cap-boil}  \\       
               & \bar{u}_{dk}, ~\bar{b}_{dk}, ~\bar{s}_{dk}, ~\bar{q}_{dk} \in [0,1] 
               &&  \nn \\
               & \sum_{k \in \cP_u} \bar{u}_{dk} \leq 1,  
                \sum_{k \in \cP_b} \bar{b}_{dk} \leq 1, 
                \sum_{k \in \cP_s} \bar{s}_{dk} \leq 1, 
                \sum_{k \in \cP_q} \bar{q}_{dk} \leq 1.  
               && \label{eqn.coarse-setppcw}
\end{alignat}
\esub

%%% Local Variables: 
%%% mode: latex
%%% TeX-master: "StochMIP-2.tex"
%%% End: 

%\renewcommand{\baselinestretch}{1.2}

%%%%%%%%%%%%%%%%%%%%%%%%%%%%%%%%%%%%%%%%%%%%%%%%%%%%%%%%%%%%%%%%%%%%%%%%
\addcontentsline{toc}{section}{References}
 \bibliographystyle{apalike}
\bibliography{StochMIP}

%%%%%%%%%%%%%%%%%%%%%%%%%%%%%%%%%%%%%%%%%%%%%%%%%%%%%%%%%%%%%%%%%%%%%%%%
\vfill
\begin{flushright}
\scriptsize
\framebox{\parbox{\textwidth}{The submitted manuscript has been created
by the UChicago Argonne, LLC, Operator of Argonne
National Laboratory (``Argonne'') under Contract No.\
DE-AC02-06CH11357 with the U.S. Department of Energy.
The U.S. Government retains for itself, and others
acting on its behalf, a paid-up, nonexclusive, irrevocable
worldwide license in said article to reproduce,
prepare derivative works, distribute copies to the
public, and perform publicly and display publicly, by or on
behalf of the Government.}}
\normalsize
\end{flushright}

\end{document}